\documentclass[11pt, reqno]{amsart}

\usepackage[english]{babel}
\usepackage[left=3.8cm, right=3.8cm, top=3.5cm, bottom=3.5cm]{geometry}
\usepackage{url, amsfonts, amsmath, amsthm, amssymb, mathtools, mathrsfs, enumerate}
\usepackage{color, xcolor, verbatim, tensor, tikz, tikz-cd}
\usetikzlibrary{angles,quotes}

\usepackage{etoolbox}

\usepackage{hyperref}
\hypersetup{colorlinks=true, linkcolor=blue, citecolor=cyan, urlcolor=magenta, linktoc=all}

\usetikzlibrary{matrix,calc}
\definecolor{wine-stain}{rgb}{0.5,0,0}

\newcommand{\red}[1]{\textcolor{red}{#1}}

\newtheorem{thm}{Theorem}[section]
\newtheorem{prop}[thm]{Proposition}
\newtheorem{lem}[thm]{Lemma}
\newtheorem{cor}[thm]{Corollary}

\theoremstyle{definition}
\newtheorem{defn}[thm]{Definition}
\newtheorem{rem}[thm]{Remark}

\newtheorem*{ques*}{Question}
\newtheorem*{thm*}{Theorem}
\newtheorem*{rem*}{Remark}
\newtheorem*{rems*}{Remarks}
\newtheorem*{exs*}{Examples}
\newtheorem*{mthm*}{Main Theorem}

\numberwithin{equation}{section}

\newcommand{\be}{\beta}

\newcommand{\om}{\omega}

\newcommand{\ul}{\underline}

\renewcommand{\d}{\partial}

\newcommand{\ddbar}{\sqrt{-1}\d\overline{\d}}

\newcommand{\ii}{\sqrt{-1}}

\newcommand{\RR}{\mathbb{R}}

\DeclareMathOperator{\Rc}{Ric}
\DeclareMathOperator{\Cal}{Cal}

\makeatletter
\renewcommand*{\eqref}[1]{%
	\hyperref[{#1}]{\textup{\tagform@{\ref*{#1}}}}%
}
\makeatother

\title[Coupled extremal K\"ahler metrics on ruled surfaces]{On the existence of coupled extremal K\"ahler metrics on ruled surfaces}
\author[Ramesh Mete]{Ramesh Mete}
\address{Department of Mathematics, Indian Institute of Technology Bombay, Powai, Mumbai - 400076, India.}
\email{ramesh2025m@gmail.com, rameshm@math.iitb.ac.in}
\date{\today}
\subjclass[2020]{Primary 53C55, 32Q15; Secondary 53C25, 35R01}
\keywords{Coupled extremal K\"ahler metrics; Coupled cscK metrics; Coupled K\"ahler--Einstein metrics; Ruled surfaces; Calabi ansatz}

\begin{document}

\begin{abstract}
We investigate the existence conditions for coupled extremal K\"ahler metrics on minimal ruled surfaces over a genus 2 Riemann surface. Using the Calabi ansatz to reduce the coupled extremal equations to ordinary differential equations, we prove that a pair of coupled extremal metrics exists for normalized K\"ahler classes $\Omega_a$ and $\Omega_b$ if and only if their parameters $(a, b)$ belong to an explicitly defined open region $\mathcal{S}_{\mathrm{ext}}$ in the positive real quadrant. Furthermore, we show that this existence region inherently contains the diagonal segment corresponding to classical extremal K\"ahler metrics.
\end{abstract}

\maketitle

\vspace*{2mm}	
\section{Introduction}

A central objective in K\"ahler geometry is the search for canonical metrics, which serve as distinguished representatives within a given K\"ahler class. Among the most prominent examples of such canonical metrics are K\"ahler--Einstein metrics, constant scalar curvature K\"ahler metrics, and Calabi's extremal metrics. Understanding when these metrics exist and whether they are unique is a primary focus of modern complex differential geometry.

\vspace*{1.5mm}
Let $(M,\omega)$ be a compact K\"ahler manifold of complex dimension $n$, with scalar curvature $R(\omega)$ and associated K\"ahler class $[\omega]$. Calabi~\cite{Cal82} introduced several functionals on the space of K\"ahler metrics in $[\omega]$. Among these, the \emph{Calabi functional} is defined by
$$\Cal(\omega):= \int_{M} R(\omega)^2 \omega^n.$$ 
An \emph{extremal K\"ahler metric} is a critical point of the Calabi functional. In particular, he showed that a K\"ahler metric $\omega$ is extremal if and only if its $(1,0)$-gradient vector field $\nabla^{1,0}_{\omega}R(\omega)$ is holomorphic, that is, $$\bar{\partial}\left(\nabla^{1,0}_{\om}R(\omega)\right)=0.$$
Here, for any function $f\in C^\infty(M,\mathbb{C})$, the $(1,0)$-gradient vector field is defined in local coordinates by
$$\nabla^{1,0}_{\omega}f := g^{j\bar{k}}(\d_{\bar k}f)\d_{j},$$ 
where $g$ denotes the Riemannian metric associated with $\omega$. This defines a section of $T^{1,0}M$ and corresponds (up to a factor of $2$) to the $(1,0)$-component of the Riemannian gradient of $f$.

	\vspace*{1.5mm}	
	The notion of an extremal K\"ahler metric generalizes that of a constant scalar curvature K\"ahler (cscK) metric, which in turn generalizes the concept of a K\"ahler–Einstein metric. The Ricci curvature form associated with $(M,\omega)$ is defined by
	$$ \Rc(\omega) := - \ddbar\log(\omega^n),$$
	and the corresponding scalar curvature is given by
	$$R(\omega) := \Lambda_{\omega}\mathrm{Ric}(\omega)= n\,\frac{\mathrm{Ric}(\omega)\wedge\omega^{n-1}}{\omega^n}.$$
	The metric $\omega$ is called K\"ahler--Einstein if $\Rc(\omega) = \lambda\omega$ for some constant $\lambda\in\mathbb{R}$, and it is said to be a cscK metric if its scalar curvature $R(\omega)$ is constant on $M$. The average scalar curvature $\underline{R}$, which depends exclusively on the K\"ahler class $[\omega]$, is expressed as
	$$\ul{R}:= \frac{\int_M R(\omega)\, \omega^n}{\int_M \omega^n} = \frac{2\pi n\, c_1(M)\cdot [\omega]^{n-1}}{[\omega]^n}.$$
	Here, $c_1(M)$ denotes the first Chern class of $M$, defined by $$c_1(M):=\frac{1}{2\pi}[\mathrm{Ric}(\omega)],$$ 
	which is independent of the choice of the K\"ahler metric $\omega$. Observe that $\omega$ is a cscK metric if and only if $R(\omega) = \underline{R}$.

\subsection{Coupled canonical metrics and known results}
\label{sec:coupled-canonical-metrics}
	
	We now introduce the coupled analogues of the canonical K\"ahler metrics discussed above. Generalizing the notion of coupled K\"ahler–Einstein metrics introduced by Hultgren and Witt Nystr\"om~\cite{Hultgren-Witt Nystrom2018}, Datar and Pingali~\cite{Datar-Pingali Coup-cscK} formulated the concept of coupled constant scalar curvature K\"ahler (coupled cscK) metrics. Let $N$ be a non-negative integer. Following \cite{Datar-Pingali Coup-cscK}, an $(N + 1)$-tuple of K\"ahler metrics $(\om_i)_{i=0}^{N}$ is called \emph{coupled cscK} if it satisfies the following system of equations:
	\begin{equation}\label{eq:all Ricci curvature are equal}
		\Rc(\omega_0)=\cdots =\Rc(\omega_{N}),
	\end{equation}
	and
	\begin{equation}\label{eq:coupled cscK second equation}
		R(\omega_0) - \lambda\Lambda_{\omega_0}\om_{\text{sum}} = \underline{R} - \lambda\underline{\omega_{\text{sum}}},
	\end{equation} 
	where $\lambda\in\mathbb{R}$, $\omega_{\text{sum}} := \sum_{i=0}^N \om_i$, and the topological constants $\underline{R}$ and $\underline{\omega_{\text{sum}}}$ are given by
	$$\underline{R} := n \, \frac{2\pi c_1(M)\cdot[\omega_0]^{n-1}}{[\omega_0]^n} \quad \text{and} \quad \underline{\omega_{\text{sum}}} := n \, \frac{[\omega_{\text{sum}}]\cdot[\omega_0]^{n-1}}{[\omega_0]^n}.$$
	Following Stoppa \cite{Stoppa2009tcscK}, when equation \eqref{eq:coupled cscK second equation} is satisfied, $\omega_0$ is said to be a $\theta$-twisted cscK metric with twisting form $\theta := \lambda\omega_{\mathrm{sum}}$.

	\vspace*{1.5mm}
	Observe that condition \eqref{eq:all Ricci curvature are equal} is equivalent to the equality of normalized volume forms:
	\begin{equation}\label{eq:normalized vol elements are equals_coupled canonical metrics}
		\frac{1}{[\omega_0]^n} \omega_0^{n} = \cdots = \frac{1}{[\omega_{N}]^n} \omega_{N}^{n}.
	\end{equation}
	Indeed, from the definition of the Ricci form, $\Rc(\omega_i) = \Rc(\omega_j)$ holds if and only if $\sqrt{-1}\partial\bar{\partial} \log(\omega_i^n / \omega_j^n) = 0$. By the maximum principle on the compact manifold $M$, this implies $\omega_i^n = e^c \omega_j^n$ for some constant $c \in \mathbb{R}$. Integrating over $M$ yields $e^{c} = {[\omega_i]^{n}}/{[\omega_j]^n}$, which proves \eqref{eq:normalized vol elements are equals_coupled canonical metrics}.

	\vspace*{1.5mm}
	Following \cite{Hultgren-Witt Nystrom2018}, an $(N + 1)$-tuple of K\"ahler metrics $(\om_i)_{i=0}^{N}$ is called \emph{coupled K\"ahler-Einstein} if it satisfies
	\begin{equation}
		\label{eq:coupled KE}
		\Rc(\omega_0)=\cdots=\Rc(\omega_N) = \lambda \omega_{\text{sum}}.
	\end{equation}
	We set $\lambda = 1$ if $M$ is Fano (i.e., $c_1(M) > 0$), and $\lambda = -1$ if $c_1(M) < 0$ (i.e., if the canonical bundle $K_M$ is ample). When $2\pi c_{1}(M) = \lambda \sum_{j=0}^{N} [\omega_j]$, any coupled cscK metric reduces to a coupled K\"ahler–Einstein metric. To see this, note that $2\pi c_1(M) = \lambda [\omega_{\mathrm{sum}}]$ implies $\underline{R} = \lambda \underline{\omega_{\mathrm{sum}}}$, so equation \eqref{eq:coupled cscK second equation} simplifies to $R(\omega_0) = \lambda \Lambda_{\omega_0} \omega_{\mathrm{sum}}$. Since both $\Rc(\omega_0)$ and $\lambda \omega_{\mathrm{sum}}$ belong to the class $2\pi c_1(M)$, the $\partial\bar{\partial}$-lemma guarantees the existence of a smooth function $f \in C^\infty(M, \mathbb{R})$ such that $\Rc(\omega_0) = \lambda \omega_{\mathrm{sum}} + \sqrt{-1}\partial\bar{\partial} f$. Taking the trace with respect to $\omega_0$ yields $$R(\omega_0) = \lambda \Lambda_{\omega_0} \omega_{\mathrm{sum}} + \Delta_{\omega_0} f,$$ which implies that $\Delta_{\omega_0} f = 0$. By compactness of $M$, $f$ is constant, establishing $\Rc(\omega_0) = \lambda \omega_{\mathrm{sum}}$. Combined with \eqref{eq:all Ricci curvature are equal}, this recovers \eqref{eq:coupled KE}.

	\vspace*{1.5mm}	
	For $N=0$, equation \eqref{eq:coupled KE} reduces to the classical K\"ahler--Einstein condition for $\omega_0$. In general, a tuple $(\omega_i)_{i=0}^{N}$ is coupled K\"ahler--Einstein if and only if, for each $j = 0, \cdots, N$, the component metric $\omega_j$ is a $(\lambda \sum_{i \neq j} \omega_i)$-twisted K\"ahler--Einstein metric. Furthermore, as in \cite{Hultgren-Witt Nystrom2018}, any standard K\"ahler--Einstein metric $\omega_{\mathrm{KE}} \in 2\pi\lambda c_1(M)$ induces a \emph{trivial} coupled K\"ahler–Einstein metric: for any choice of positive constants $\lambda_i > 0$ satisfying $\sum_{i=0}^{N} \lambda_i = 1$, the tuple $(\lambda_i \omega_{\mathrm{KE}})_{i=0}^{N}$ satisfies \eqref{eq:coupled KE} because $$\Rc(\lambda_i \omega_{\mathrm{KE}}) = \Rc(\omega_{\mathrm{KE}}) = \lambda \omega_{\mathrm{KE}}.$$ Observe that existence of coupled K\"ahler--Einstein metrics gives a decomposition of the K\"ahler class $2\pi\lambda c_1(M)$. More precisely, if $\alpha_i$ denotes the K\"ahler class of $\omega_i$, then \eqref{eq:coupled KE} yields
	$$\sum_{i =0}^{N} \alpha_i = 2\pi\lambda c_1(M).$$

	\vspace*{1.5mm}
	Hultgren \cite{Hultgren2019} provided an example of a \emph{toric} \emph{Fano} four-manifold $M$ that does not admit a K\"ahler--Einstein metric (because the Futaki invariant of $M$ is nonzero), but admits a coupled K\"ahler--Einstein metric for a decomposition $(\alpha_1, \alpha_2)$ of $2\pi c_1(M)$. Moreover, as noted in \cite[Remark 6]{Hultgren2019}, one can construct a decomposition $(\alpha_1, \alpha_2)$ of $2\pi c_1(M)$ even on a Fano K\"ahler--Einstein manifold $M$ such that there are no coupled K\"ahler--Einstein metrics in $(\alpha_1, \alpha_2)$. When $c_1(M) < 0$, it was established by Hultgren and Witt Nystr\"om \cite{Hultgren-Witt Nystrom2018} via the calculus of variations, and by Pingali \cite{Pingali2018cKE} via the continuity method, that any decomposition of $-2\pi c_1(M)$ admits a unique coupled K\"ahler--Einstein metric. If $M$ is Fano and $(\omega_i)_{i=0}^N$ is a coupled K\"ahler--Einstein metric on a given decomposition $(\alpha_0, \cdots, \alpha_N)$ of $2\pi c_1(M)$, then $\mathrm{Aut}_0(M)$ is reductive; furthermore, if $(\omega'_i)_{i=0}^N$ is another coupled K\"ahler--Einstein metric on $(\alpha_0, \cdots, \alpha_N)$, then $\omega'_i = f^{\ast}\omega_i$ for some $f \in \mathrm{Aut}_0(M)$ (cf. \cite{Hultgren-Witt Nystrom2018}). Datar and Pingali \cite{Datar-Pingali Coup-cscK} gave a moment map interpretation for the coupled cscK system (and hence for the coupled K\"ahler--Einstein equation). In recent years, these coupled canonical metrics have been extensively studied (see, e.g., \cite{Datar-Pingali Coup-cscK, DelHult2021, FutZhang2021cKE, FutZhang2021cSasRic-sol, Hashimoto2023-balanced, Hultgren2019, Hultgren-Witt Nystrom2018, Lee2023-moment-coupled, Mete2026ccscK, Nak2021cKE, Nak2023cKE, Pingali2018cKE, Tak2021cKE-geom-qua, Tak2021cKE-Ric-ite} and references therein).

	\vspace*{1.5mm}
	As noted above, extremal K\"ahler metrics generalize cscK metrics by relaxing the requirement of constant scalar curvature to the condition that their $(1,0)$-gradient vector fields are holomorphic. Analogously, we extend the coupled cscK condition by requiring that the coupled scalar expression generates a holomorphic vector field with respect to $\omega_0$.
	
	\begin{defn}\label{def:coupled-extremal-metrics}
		An $(N + 1)$-tuple of K\"ahler metrics $(\omega_i)_{i=0}^{N}$ is called \emph{coupled extremal} if the condition \eqref{eq:all Ricci curvature are equal} holds and the following holomorphicity condition is satisfied:
		\begin{equation}\label{eq:coupled extremal second equation}
			\bar{\partial} \left( \nabla^{1,0}_{\omega_0} \left( R(\omega_0) - \Lambda_{\omega_0}\omega_{\text{sum}} \right)\right) =0.
		\end{equation}
	\end{defn}

	\vspace*{1.5mm}
	For simplicity, we have assumed that $\lambda = 1$. When $N=0$, this definition reduces to that of classical extremal metrics. For $N \geq 1$, observe that if $[\omega_i] = [\omega_0]$ for all $i \in \{1, \dots, N\}$, then the tuple $(\omega_i)_{i=0}^{N}$ with $\omega_i = \omega_0$ is a coupled extremal metric if and only if $\omega_0$ is itself an extremal metric.

\subsection{Main results}
	
	In this paper, we study the existence of coupled extremal metrics on certain compact complex surfaces, $X := \mathbb{P}(L\oplus\mathcal{O})$, known as (minimal) ruled surfaces. Here, $L$ is a degree $-1$ holomorphic line bundle over a genus $2$ Riemann surface $(\Sigma, \omega_{\Sigma})$ equipped with a K\"ahler metric $\omega_{\Sigma}$, and $\mathcal{O}$ denotes the trivial line bundle over $\Sigma$. These complex surfaces are explicit examples of pseudo-Hirzebruch surfaces studied in \cite{Ton98-extrem}. It is well known that any K\"ahler class on the ruled surface $X$ takes the form $\Omega := a_1 \mathrm{C} + a_2 D_{\infty}$ for constants $a_1, a_2 > 0$, where $\mathrm{C}$ represents the Poincar\'e dual of a fiber of $X$ and $D_{\infty}$ is the infinity divisor on $X$. Throughout this work, we focus on normalized K\"ahler classes of the form $\Omega_{a} := 2\pi(\mathrm{C} + a D_{\infty})$ for $a > 0$.

	\vspace*{1.5mm}
	In \cite[Theorem 1.3]{Mete2026ccscK}, it was shown that any pair of normalized K\"ahler classes $(\Omega_a, \Omega_b)$ with $a, b > 0$ on the ruled surface $X$ fails to admit a coupled cscK metric, owing to the non-vanishing of the generalized (or coupled) Futaki invariant introduced in \cite{Datar-Pingali Coup-cscK}. In contrast, there exists a pair of K\"ahler metrics $(\omega, \chi) \in \Omega_a \times \Omega_b$ such that $\omega$ is a $\chi$-twisted cscK metric whenever $b \geq \frac{a(5a + 4)}{2(a + 1)}$ (see \cite[Theorem 1.1]{Mete2026ccscK}).

	\vspace*{1.5mm}
	Our main result concerning the existence of coupled extremal K\"ahler metrics on $X$ is as follows.
	
	\begin{thm}\label{thm:coupled-extremal-metric-exis-1}
		Let $X = \mathbb{P}(L\oplus\mathcal{O})$ be a minimal ruled surface over a genus $2$ Riemann surface $\Sigma$. For normalized K\"ahler classes $\Omega_a := 2\pi(\mathrm{C} + a D_\infty)$ and $\Omega_b := 2\pi(\mathrm{C} + b D_\infty)$ with $a, b > 0$, there exists a pair of coupled extremal K\"ahler metrics $(\omega, \chi)\in\Omega_a \times \Omega_b$ satisfying the Calabi ansatz (see Section~\ref{subsec:momentum-profile}) if and only if $(a, b) \in \mathcal{S}_{\mathrm{ext}}$, where $\mathcal{S}_{\mathrm{ext}}\subset\mathbb{R}_{>0}^2$ is an open region containing the diagonal segment
		\begin{equation}\label{eq:diagonal-segment-std-extremal-metric}
			\Delta_{k_1} := \Big\{(x, x)\in\mathbb{R}_{>0}^2:~ 0 < x < k_1\Big\}.
		\end{equation}
		Here, $k_1 \approx 18.889$ is the unique positive root of the quartic polynomial 
		\begin{equation}\label{eq:quartic-polynomial-std-extremal-metric}
			\wp(x) := x^4 - 16x^3 - 52x^2 - 48x -12.
		\end{equation}
	\end{thm}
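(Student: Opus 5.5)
The plan is to work entirely within the Calabi ansatz of Section~\ref{subsec:momentum-profile}. Both $\omega\in\Omega_a$ and $\chi\in\Omega_b$ are written as $S^1$-invariant metrics depending only on the fibrewise variable $t=\log|\xi|^2_h$. Each has a momentum variable: $\tau\in[1,1+a]$ for $\omega$ and $\sigma\in[1,1+b]$ for $\chi$. Each also has a momentum profile $\varphi(\tau)=d\tau/dt$, respectively $\psi(\sigma)=d\sigma/dt$. These are positive on the open intervals, vanish at the endpoints, and have derivative $\pm1$ there (smooth extension over $D_0$ and $D_\infty$). The aim is to translate each of the two conditions in Definition~\ref{def:coupled-extremal-metrics} into ODEs in $\tau$.

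\textbf{Step 1: the Ricci condition.} In the ansatz the volume form of $\omega$ is a constant multiple of $\tau\varphi\,dt\wedge d\theta\wedge\omega_\Sigma$, and similarly for $\chi$. Condition \eqref{eq:normalized vol elements are equals_coupled canonical metrics} therefore becomes $\frac{d}{dt}(\sigma^2)=c\,\frac{d}{dt}(\tau^2)$ with $c=\frac{(1+b)^2-1}{(1+a)^2-1}$. Integrating from the zero section gives
$$\sigma^2-1=c\,(\tau^2-1),\qquad \psi(\sigma)=c\,\tau\varphi(\tau)/\sigma .$$
Hence $\chi$ is completely determined by $\omega$. It is automatically a genuine metric in $\Omega_b$ whenever $\varphi$ is admissible: positivity is inherited, and one should check that the boundary derivatives $\psi'=\pm1$ follow from $\varphi'=\pm1$ together with the choice of $c$.

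\textbf{Step 2: the holomorphicity condition.} Next express $\Lambda_\omega\chi=\sigma/\tau+\psi\,d\sigma/(\varphi\,d\tau)$, which is an explicit function of $\tau$ by Step 1. Also use the standard formula $R(\omega)=\frac{2\kappa}{\tau}-\frac{(\tau\varphi)''}{\tau}$ with $\kappa=-2$ for genus $2$ and the normalization of $\omega_\Sigma$. Holomorphy potentials of $S^1$-invariant metrics in the ansatz are exactly the affine functions of $\tau$, so \eqref{eq:coupled extremal second equation} becomes
$$R(\omega)-\Lambda_\omega(\omega+\chi)=A\tau+B .$$
This is a linear second order ODE for $\tau\varphi$. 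Integrating twice and imposing the four boundary conditions at $\tau=1,1+a$ determines the two integration constants together with $A$ and $B$. The result is a closed formula $\varphi=\varphi_{a,b}(\tau)$, depending smoothly on $(a,b)$ and involving the square root $\sigma(\tau)$.

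\textbf{Step 3: the existence region.} Define $\mathcal S_{\mathrm{ext}}$ as the set of $(a,b)$ for which $\varphi_{a,b}>0$ on $(1,1+a)$. The "if and only if" is then a tautology of Steps 1–2, since positivity is the only remaining requirement. For openness, write $\varphi_{a,b}(\tau)=(\tau-1)(1+a-\tau)\,q_{a,b}(\tau)$ after rescaling to a fixed interval. Because $\varphi'=\pm1$ at the endpoints, $q_{a,b}$ is strictly positive on the closed interval, and strict positivity of a continuous family on a compact set is an open condition.

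\textbf{Step 4: the diagonal.} For $a=b$ we have $c=1$, $\sigma=\tau$ and $\chi=\omega$. Then $\Lambda_\omega(\omega+\chi)=4$, so the equation reduces to the classical extremal equation $R(\omega)=A\tau+B'$. Its solution is $\varphi=P(\tau)/\tau$ with $P$ a quartic, recovering Tønnesen-Friedman's profile. Factor $P=(\tau-1)(1+a-\tau)Q(\tau)$ with $Q$ quadratic. Positivity of $Q$ on $[1,1+a]$ should reduce, after clearing denominators, to the sign of $\wp(a)$. One then checks that $\wp$ has a unique positive root $k_1$ (Descartes' rule: exactly one sign change), so the condition holds exactly for $0<a<k_1$. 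This gives $\Delta_{k_1}\subset\mathcal S_{\mathrm{ext}}$.

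I expect the main obstacle to be Step 2's integration together with verifying the boundary conditions for $\psi$. The term $\Lambda_\omega\chi$ involves $\sqrt{c(\tau^2-1)+1}$, so the integration constants and $A,B$ must be solved for from a nonpolynomial system. I would first write everything in the variable $\tau^2$, or integrate against $\tau\,d\tau$, to keep the integrals elementary. The algebraic reduction of positivity to $\wp(a)<0$ on the diagonal is routine but computational.
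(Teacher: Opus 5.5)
Your outline follows the paper's proof, written in the shifted momentum variable $\tau\in[1,1+a]$. The volume condition gives $\sigma^2-1=c(\tau^2-1)$, which is the paper's \eqref{eq:unique-positive-solution-psi}. Holomorphy makes $R(\omega)-\Lambda_\omega\chi$ affine in $\tau$, two integrations with the four boundary conditions fix $\varphi_{a,b}$, and $\mathcal S_{\mathrm{ext}}$ is defined as the positivity set, so the equivalence is tautological. Your openness argument is a tidier version of the paper's Taylor-expansion proof of Lemma~\ref{lem:openness-of-the-sols-set-F}: you rescale to $[0,1]$, divide by $u(1-u)$, note the quotient equals $a$ at both ends, and use compactness; joint continuity of the quotient up to the ends follows from Hadamard's lemma with parameters. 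Your check that $\chi$'s profile has slopes $\pm1$ fills in a step the paper leaves implicit. On the diagonal, where the paper only cites the known result, your plan works. With $s=\tau-1$, the quadratic factor $(2a+2)s^2+(-a^2+4a+6)s+(a^2+6a+6)$ has discriminant exactly $\wp(a)$. Only the implication $\wp(a)<0\Rightarrow\varphi>0$ is needed for $\Delta_{k_1}\subset\mathcal S_{\mathrm{ext}}$.

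However, two formulas in Step 2 are wrong as written. First, with the paper's normalization ($R(\omega_\Sigma)=-2$, curvature of $h$ equal to $-\omega_\Sigma$), one has $R(\omega)=-\frac{2}{\tau}-\frac{(\tau\varphi)''}{\tau}$. So $2\kappa=-2$, not $-4$. This is not cosmetic. With $-4/\tau$, the diagonal quartic $\tau\varphi$ is lowered by $(\tau-1)^2(1+a-\tau)^2/(a^2+6a+6)$. Its quadratic factor then has discriminant $4(a^4-7a^3-22a^2-18a-3)$, so Step 4 would give a threshold near $9.5$ instead of $k_1$.

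Second, the fibre eigenvalue of $\chi$ relative to $\omega$ is $\psi(\sigma)/\varphi(\tau)$, which already equals $d\sigma/d\tau$. Hence $\Lambda_\omega\chi=\sigma/\tau+d\sigma/d\tau=(\tau\sigma)'/\tau$; compare \eqref{eq:trace-func-of-chi-wrt-omega}. Your expression, read literally, has an extra factor $d\sigma/d\tau$.

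With the correct formula, the obstacle you anticipate in Step 2 disappears. One has $\tau\Lambda_\omega\chi=(\tau\sigma)'$ and $\int\tau\sigma\,d\tau=\sigma^3/(3c)$. Since $\sigma(1)=1$ and $\sigma(1+a)=1+b$, the conditions on $A$, $B$ and the two integration constants form a linear system with coefficients rational in $(a,b)$. This is exactly the system \eqref{eq:solving-constants-A-and-B-1st-equ}--\eqref{eq:solving-constants-A-and-B-2nd-equ}.
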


	\vspace*{1.5mm}
	The diagonal segment $\Delta_{k_1}$ is in one-to-one correspondence with the set of normalized K\"ahler classes $\{\Omega_x :\, x \in (0, k_1)\}$ that admit an extremal K\"ahler metric. It is expected that the set $\mathcal{S}_{\mathrm{ext}}$ is also connected.

	\vspace*{1.5mm}
	As a corollary of Theorem~\ref{thm:coupled-extremal-metric-exis-1}, we have the following non-existence result.
	
	\begin{cor}\label{cor:non-exis-coup-extrem-small-value-of-b}
		Let $X = \mathbb{P}(L\oplus\mathcal{O})$ be a minimal ruled surface over a genus $2$ Riemann surface $\Sigma$. Consider the normalized K\"ahler classes $\Omega_a := 2\pi(\mathrm{C} + a D_\infty)$ and $\Omega_b := 2\pi(\mathrm{C} + b D_\infty)$ on $X$, where $a, b > 0$. For every $a > k_0$, there exists a $\delta_{a} > 0$ such that for all $b \in (0, \delta_a)$, the pair $(\Omega_a, \Omega_b)$ does not admit a coupled extremal K\"ahler metric satisfying the Calabi ansatz. Here,  $k_0 \approx 12.451$ is the unique positive root of the polynomial
		\begin{equation}\label{eq:quartic-polynomial-cor-non-exis}
			\vartheta(x) := 9x^4-88x^3-280x^2-240x-48.
		\end{equation}
	\end{cor}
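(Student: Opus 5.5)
The corollary should follow from the explicit description of $\mathcal{S}_{\mathrm{ext}}$ that the proof of Theorem~\ref{thm:coupled-extremal-metric-exis-1} produces, so the plan is to analyse the defining inequalities of $\mathcal{S}_{\mathrm{ext}}$ in the limit $b\to 0^+$ with $a$ fixed. First, write down the Calabi ansatz reduction of the system. Both $\omega$ and $\chi$ have momentum profiles $\phi(\tau)$ and $\psi(\tau)$ on the intervals $[1,1+a]$ and $[1,1+b]$. The equality of normalized volume forms \eqref{eq:normalized vol elements are equals_coupled canonical metrics} forces a monotone change of variables $\tau\mapsto\sigma(\tau)$ linking the two momentum intervals. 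Condition \eqref{eq:coupled extremal second equation} says that $R(\omega)-\Lambda_\omega(\omega+\chi)$ is an affine function of $\tau$, i.e. a Killing potential. Integrating the resulting ODE twice with the boundary conditions $\phi(1)=\phi(1+a)=0$ and $\phi'(1)=2=-\phi'(1+a)$ determines $\phi$ explicitly as a function $\phi_{a,b}(\tau)$ that is rational in $a,b$ and polynomial, or nearly so, in $\tau$. The same holds for $\psi$ through $\sigma$.

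By Theorem~\ref{thm:coupled-extremal-metric-exis-1}, existence is equivalent to the positivity $\phi_{a,b}>0$ on $(1,1+a)$, together with the analogous positivity for $\psi$. This is exactly the condition $(a,b)\in\mathcal{S}_{\mathrm{ext}}$. So it suffices to show that for $a>k_0$ this positivity fails for all sufficiently small $b>0$.

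Second, compute the limiting profile $\phi_{a,0}:=\lim_{b\to0^+}\phi_{a,b}$. The coefficients depend continuously on $b$ up to $b=0$; I expect terms like $b/a$ and $\log$-type terms from $\sigma$ to degenerate harmlessly. The claim is that the sign of $\phi_{a,0}$ near an endpoint is governed by $\vartheta(a)$. Because $\phi_{a,0}$ vanishes at the endpoint $\tau=1+a$ with slope $-2$, the relevant quantity is the second-order coefficient there, or equivalently the value at an interior critical point. My guess is that $\phi_{a,0}(\tau)$ factors as $(\tau-1)(1+a-\tau)\,Q_a(\tau)$ with $Q_a$ linear or quadratic, and that $Q_a$ at an endpoint equals $c\,\vartheta(a)/(\text{positive})$ with a negative constant $c$. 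Then $\vartheta(a)>0$ for $a>k_0$ would force $\phi_{a,0}<0$ somewhere in $(1,1+a)$. The factor $9$ and the coefficients of $\vartheta$ suggest that this arises from clearing denominators like $(a^2+6a+6)$ together with the topological average $\underline{\omega_{\rm sum}}$ evaluated at $b=0$.

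Third, conclude by continuity. Fix $a>k_0$ and pick $\tau_0\in(1,1+a)$ with $\phi_{a,0}(\tau_0)<0$. Since $(b,\tau)\mapsto\phi_{a,b}(\tau)$ is continuous at $b=0$, there is a $\delta_a>0$ with $\phi_{a,b}(\tau_0)<0$ for all $b\in(0,\delta_a)$. Hence $(a,b)\notin\mathcal{S}_{\mathrm{ext}}$, and by Theorem~\ref{thm:coupled-extremal-metric-exis-1} no coupled extremal pair satisfying the Calabi ansatz exists.

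Uniqueness of the positive root $k_0$ follows from Descartes' rule of signs, since $\vartheta$ has exactly one sign change. This also gives $\vartheta>0$ exactly for $x>k_0$.

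The main obstacle is the second step. It requires extracting the explicit $b\to0$ limit of the profile and identifying $\vartheta$ as the obstruction. The risk is that $\sigma$ or the twisting term $\Lambda_\omega\chi$ degenerates as the interval $[1,1+b]$ collapses, so continuity must be checked carefully. If the profile itself is singular at $b=0$, I would instead normalize by the blow-up rate and study the sign of the leading term.
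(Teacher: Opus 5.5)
Your outer framework matches the paper's. You fix $a$, let $b\to 0^+$, find a point $\tau_0\in(0,a)$ where the limiting profile is negative, and conclude by continuity of $b\mapsto F(a,b,\tau_0)$ at that single point. Your third step and the Descartes argument for $k_0$ are fine.

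The gap is the second step, which is the entire content of the corollary. You do not compute the limit, and the mechanism you guess for where $\vartheta$ enters cannot be right. You suggest that $\vartheta(a)$ shows up in the value of the quadratic factor $Q_a$ at an endpoint, or in a second-order coefficient there. But the endpoint data are pinned by the boundary conditions for every $b$, including in the limit. In the paper's normalization, $F=(1+\tau)\phi$ satisfies $F(0)=F(a)=0$, $F'(0)=1$ and $F'(a)=-(1+a)$. Writing $F=\tau(a-\tau)Q_a(\tau)$ therefore forces $Q_a(0)=1/a$ and $Q_a(a)=(1+a)/a$, which are positive and independent of $\vartheta$. Likewise, the nonzero endpoint slopes mean no second-order term can make the profile negative near an endpoint. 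The obstruction has to be interior.

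\textbf{The missing computation.} In \eqref{eq:solution-function-F}, the coefficients $A_4,A_3,A_2$ are rational in $b$ with denominator a multiple of $b+2$, hence continuous at $b=0$. The only non-polynomial term satisfies $\frac{1-\zeta^3}{3\eta}\to-\frac{\tau(\tau+2)}{2}$ as $\eta\to 0$. No logarithmic terms appear and nothing degenerates, since $\psi$ is given explicitly by \eqref{eq:unique-positive-solution-psi}. This yields
\[
\lim_{b\to 0^+}F(a,b,\tau)=\frac{\tau(a-\tau)}{2a(a^2+6a+6)}\,G_a(\tau),\qquad G_a(\tau)=(5a+4)\tau^2+(-3a^2+8a+12)\tau+2(a^2+6a+6).
\]
The key observation is that $\vartheta(a)$ is precisely the discriminant of $G_a$. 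Hence $G_a$ attains its minimum value $-\vartheta(a)/\bigl(4(5a+4)\bigr)$ at $\tau_a^*=\frac{3a^2-8a-12}{2(5a+4)}$. This point lies in $(0,a)$ when $a>k_0$: $3a^2-8a-12>0$ there, and $\tau_a^*<a$ always because $7a^2+16a+12>0$. So for $a>k_0$ the limit is strictly negative at $\tau_0=\tau_a^*$, and your continuity step then finishes the proof.

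Two minor points. Positivity of the momentum profile $\psi'\phi$ of $\chi$ is automatic here, so only $F>0$ matters. Your interval $[1,1+a]$ with endpoint slopes $\pm 2$ is not the normalization in which $\Omega_a$ is set up, though that is cosmetic.
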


	\vspace*{1.5mm}
	We have the following result regarding the boundedness of the set $\mathcal{S}_{\mathrm{ext}}$.
	
	\begin{prop}\label{prop:bddness-of-the-coupled-extremal-region}
		For every fixed $x > 0$, the set of values of $y > 0$ for which $(x, y) \in \mathcal{S}_{\mathrm{ext}}$ is bounded above.
	\end{prop}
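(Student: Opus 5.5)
The proof goes through the Calabi ansatz reduction. We reduce the coupled extremal system to explicit polynomial data on the momentum interval. Then we show that for fixed $a=x$, the positivity condition defining $\mathcal{S}_{\mathrm{ext}}$ fails once $b=y$ is large.

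Under the Calabi ansatz (Section~\ref{subsec:momentum-profile}), $\omega$ has momentum variable $\tau\in[1,1+a]$ and momentum profile $\varphi(\tau)$, while $\chi$ has profile $\psi$ on $[1,1+b]$. Both satisfy the usual boundary conditions: vanishing at the endpoints with derivatives $\pm 2$ there, up to the normalization fixed in that section. The equal-Ricci condition \eqref{eq:all Ricci curvature are equal} amounts to equality of normalized volume forms \eqref{eq:normalized vol elements are equals_coupled canonical metrics}. This yields an affine change of variable $\sigma=1+\frac{b}{a}(\tau-1)$ between the two momentum coordinates, together with a relation expressing $\psi$ in terms of $\varphi$ (scaled by the factor $b/a$ and the ratio of the $\Sigma$-directions $\sigma/\tau$).

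The coupled extremal equation \eqref{eq:coupled extremal second equation} says that $R(\omega)-\Lambda_\omega(\omega+\chi)$ is an affine function of $\tau$. In the ansatz, $R(\omega)=\frac{2\kappa}{\tau}-\frac{1}{\tau}(\tau\varphi)''$ with $\kappa$ the genus-$2$ curvature constant, and $\Lambda_\omega\chi$ is an explicit rational expression in $\tau$, $\varphi$ and the parameters. The equation therefore becomes a linear second-order ODE for $\tau\varphi$. It integrates to $\varphi=P_{a,b}(\tau)/\tau$, where $P_{a,b}$ is an explicit polynomial. Its free constants (the two extremal coefficients and two integration constants) are fixed by the four boundary conditions. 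Existence is then equivalent to $P_{a,b}>0$ on the open interval $(1,1+a)$, and this is the defining condition of $\mathcal{S}_{\mathrm{ext}}$, which I take as established in the proof of Theorem~\ref{thm:coupled-extremal-metric-exis-1}.

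To prove the proposition, fix $a=x$ and study the coefficients of $P_{x,y}$ as $y\to\infty$. The twisting term $\Lambda_\omega\chi$ scales like $y$, so the solved coefficients are rational in $y$ with growing leading parts. I would compute the leading-order limit $\lim_{y\to\infty}P_{x,y}(\tau)/y^m$, where $m$ is the top power of $y$ appearing. The goal is to show this limit is a nonzero polynomial $Q_x(\tau)$ that takes a negative value at some interior point $\tau_0\in(1,1+x)$. The natural candidates are a point near an endpoint, where the boundary derivative conditions pin down the sign, or the midpoint. By continuity, $P_{x,y}(\tau_0)<0$ for all $y$ beyond some $Y(x)$, so $(x,y)\notin\mathcal{S}_{\mathrm{ext}}$ for $y>Y(x)$, which is the claimed bound.

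The main obstacle is the sign analysis of $Q_x$. The boundary conditions force $P_{x,y}$ to vanish at both endpoints with prescribed first derivatives, and those derivatives are independent of $y$. The dominant $y$-part $Q_x$ must therefore vanish to second order at both endpoints. So $Q_x(\tau)=(\tau-1)^2(1+x-\tau)^2\,r_x(\tau)$, and it suffices to show that $r_x$ is negative somewhere on $(1,1+x)$, ideally everywhere. The first thing I would try is to compute $r_x$ explicitly: I expect it to be a constant or linear factor carrying the sign of $-\int$ of the twisting contribution. Verifying its negativity reduces to an elementary inequality in $x$.
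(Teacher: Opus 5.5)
There is a genuine gap. It sits in the first step of your reduction, so it contaminates exactly the sign analysis you defer.

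Equality of normalized volume forms does \emph{not} make the two moment coordinates affinely related. In the paper's normalization the volume forms are $(1+\tau)\,p^\ast\omega_\Sigma\wedge d\tau\wedge d\theta$ and $(1+\sigma)\,p^\ast\omega_\Sigma\wedge d\sigma\wedge d\theta$. The condition is therefore $(1+\sigma)\,d\sigma/(b^2+2b)=(1+\tau)\,d\tau/(a^2+2a)$, which gives $\sigma=\psi(\tau)=-1+\sqrt{1+\eta(\tau^2+2\tau)}$ with $\eta=(b^2+2b)/(a^2+2a)$. In your shifted variables this reads $\sigma^2-1=\eta(\tau^2-1)$. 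An affine $\sigma$ violates the equal-volume condition unless $a=b$. Consequently $F=(1+\tau)\phi$ is not a polynomial: it equals $P_4+\frac{1-\zeta^3}{3\eta}$ with $\zeta=\sqrt{1+\eta(\tau^2+2\tau)}$. Nor does $\Lambda_\omega\chi$ scale uniformly like $y$: $\psi'(0)=\eta\sim y^2$, whereas $\psi(\tau)\sim y$ for each fixed $\tau>0$.

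Your overall strategy is the paper's: divide by a power of $y$, find an interior point where the limit is negative, and conclude by pointwise convergence. But the non-uniformity just described is the entire mechanism, and your argument cannot see it. For fixed $\tau\in(0,x)$, $\lim_{y\to\infty}F(x,y,\tau)/y$ is a quartic in $\tau$ with no terms of degree below $2$, minus $\frac{(\tau^2+2\tau)^{3/2}}{3\sqrt{x^2+2x}}$. Hence it behaves like $-\frac{2\sqrt{2}}{3\sqrt{x^2+2x}}\,\tau^{3/2}$ as $\tau\to0^+$, is negative at some small $\tau'_x$, and $F(x,y,\tau'_x)<0$ for all large $y$.

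Your ``double zero at both endpoints'' step fails at $\tau=0$ because $F/y$ does not converge in $C^2$ up to that endpoint. The limit vanishes only to order $\tau^{3/2}$, so no factorization $\tau^2(x-\tau)^2r_x$ exists.

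Worse, carry out your plan with an affine relation (in the paper's coordinates, $\psi=(b/a)\tau$). The problem becomes exactly affine in $b$, and one finds $F=F_0+b\,\frac{\tau^2(a-\tau)^2}{2a(a^2+6a+6)}$ with $F_0$ independent of $b$, i.e.\ $r_x>0$. Your reduction would then predict $F>0$ on $(0,a)$ for all large $b$, the opposite of the statement. So the step you left as ``an elementary inequality in $x$'' is not merely unverified; in your setup it is false. The correct sign comes only from the square-root term that your reduction discards.
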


	\vspace*{1.5mm}
	Combining Theorem~\ref{thm:coupled-extremal-metric-exis-1} and Proposition~\ref{prop:bddness-of-the-coupled-extremal-region}, we obtain the following non-existence result.
	
	\begin{cor}\label{cor:non-exis-coup-extrem-large-value-of-b}
		Let $X = \mathbb{P}(L\oplus\mathcal{O})$ be a minimal ruled surface over a genus $2$ Riemann surface $\Sigma$. Consider the normalized K\"ahler classes $\Omega_a := 2\pi(\mathrm{C} + a D_\infty)$ and $\Omega_b := 2\pi(\mathrm{C} + b D_\infty)$ on $X$, where $a, b > 0$. For every fixed $a > 0$, there exists a $K(a) > 0$ such that for all $b > K(a)$, the pair $(\Omega_a, \Omega_b)$ does not admit a coupled extremal K\"ahler metric satisfying the Calabi ansatz.
	\end{cor}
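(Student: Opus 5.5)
The plan is to argue by contradiction, combining the ``only if'' direction of Theorem~\ref{thm:coupled-extremal-metric-exis-1} with the fiberwise boundedness of Proposition~\ref{prop:bddness-of-the-coupled-extremal-region}. No further analysis of the coupled extremal ODE system should be needed.

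Fix $a>0$ and consider the slice
$$\mathcal{S}_a := \{\,y>0 : (a,y)\in\mathcal{S}_{\mathrm{ext}}\,\}.$$
By Proposition~\ref{prop:bddness-of-the-coupled-extremal-region} with $x=a$, the set $\mathcal{S}_a$ is bounded above. If $\mathcal{S}_a$ is nonempty, put $K(a):=\max\{1,\sup \mathcal{S}_a\}$; if it is empty, put $K(a):=1$. In both cases $K(a)>0$ is finite, and every $y\in\mathcal{S}_a$ satisfies $y\le K(a)$.

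Now let $b>K(a)$ and suppose, for contradiction, that $(\Omega_a,\Omega_b)$ admits a pair of coupled extremal K\"ahler metrics $(\omega,\chi)\in\Omega_a\times\Omega_b$ satisfying the Calabi ansatz. The ``only if'' direction of Theorem~\ref{thm:coupled-extremal-metric-exis-1} then gives $(a,b)\in\mathcal{S}_{\mathrm{ext}}$, so $b\in\mathcal{S}_a$ and hence $b\le K(a)$. This contradicts $b>K(a)$, which proves the corollary.

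There is no genuine obstacle here: all the analytic content sits in Theorem~\ref{thm:coupled-extremal-metric-exis-1} and Proposition~\ref{prop:bddness-of-the-coupled-extremal-region}. The only care needed is to cover the case where the slice $\mathcal{S}_a$ is empty and to keep $K(a)$ strictly positive, both of which the choice above handles.
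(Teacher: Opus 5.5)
Your proposal is correct and follows the paper's argument: the paper obtains this corollary by combining the ``only if'' direction of Theorem~\ref{thm:coupled-extremal-metric-exis-1} with the slice-boundedness in Proposition~\ref{prop:bddness-of-the-coupled-extremal-region}, as you do. Your explicit choice of $K(a)$, covering the empty-slice case and keeping $K(a)>0$, is harmless extra care. The paper's proof of the proposition already produces such a $K(a)$ directly, with $F(a,b,\tau'_a)<0$ for all $b>K(a)$.
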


	\begin{rem}
		For $a > k_0$, we expect that $K(a) > \delta_a > 0$ and that $k_1$ lies in the open interval $(\delta_a, K(a))$.
	\end{rem}

	\vspace*{1.5mm}
	The strategy for proving Theorem \ref{thm:coupled-extremal-metric-exis-1} is divided into two steps:
	
	\begin{itemize}
		\item \textbf{Step 1:} Under the Calabi ansatz, the system of coupled extremal K\"ahler metric equations \eqref{eq:all Ricci curvature are equal}--\eqref{eq:coupled cscK second equation}---specifically, \eqref{eq:Ricci curv equal, N=2}--\eqref{eq:coupled extremal second equ, N=2}---reduces to a pair of ordinary differential equations (ODEs). These ODEs govern a momentum profile $\phi(\tau)$ associated with $\omega$ and a function $\psi(\tau)$ related to the momentum profile of $\chi$, where $\tau \in [0, a]$. Applying the boundary conditions \eqref{eq:boundary conditions for psi}, $\psi$ is uniquely determined as in \eqref{eq:unique-positive-solution-psi}:
		$$\psi(\tau) = -1 + \sqrt{\frac{b^2+2b}{a^2+2a}(\tau^2+2\tau)+1}.$$
		Substituting $\psi(\tau)$ into the second-order ODE \eqref{eq:ODE coupled extremal, N=2}, integrating twice, and applying the boundary conditions \eqref{eq:boundary conditions for momentum profile} and \eqref{eq:boundary conditions for psi} yields
		$$F(\tau) = P_4(\tau) + \frac{a^2+2a}{3(b^2+2b)}\left(1 - \left(\frac{b^2+2b}{a^2+2a}(\tau^2+2\tau)+1\right)^{3/2}\right),$$
		where $F(\tau) = (1+\tau)\phi(\tau)$ and $P_4(\tau)$ is a polynomial of degree four (see \eqref{eq:solution-function-F} for its explicit formula). By construction, $(\omega,\chi)$ form a pair of coupled extremal K\"ahler metrics if and only if the function $\phi(\tau) = \frac{F(\tau)}{1+\tau}$ defines a valid momentum profile on $(0, a)$, which is equivalent to requiring that $F(\tau) > 0$ for all $\tau \in (0, a)$.
		
		\vspace*{1.5mm}
		\item \textbf{Step 2:} Given the pair $(\Omega_a, \Omega_b)$ of normalized K\"ahler classes on $X$, we obtain a function $F(\tau)$ given by \eqref{eq:solution-function-F} that satisfies the boundary conditions \eqref{eq:bdd-cond-for-solution-function-F}. To emphasize its dependence on the parameters $a$ and $b$, we write $F(a, b, \tau)$ instead of $F(\tau)$. Consider the existence region
		$$\mathcal{S}_{\mathrm{ext}} = \Big\{(x, y)\in \mathbb{R}_{>0}^2 :~ F(x, y, \tau) > 0 \text{ for all } \tau \in (0, x)\Big\}.$$
		Here, $F(x, y, \cdot)$ is the solution function for the pair $(\Omega_x, \Omega_y)$. Note that $(\omega,\chi)\in\Omega_a \times \Omega_b$ form a pair of coupled extremal K\"ahler metrics if and only if the parameter point $(a, b)$ belongs to $\mathcal{S}_{\mathrm{ext}}$.
	\end{itemize}

	\vspace*{1.5mm}
	We show that $\mathcal{S}_{\mathrm{ext}}$ is an open subset of $\mathbb{R}_{>0}^2$ containing the diagonal segment $\Delta_{k_1}$. Note that if $(a, b)$ lies on the boundary $\partial \mathcal{S}_{\mathrm{ext}}$, then the metric $\omega$ degenerates.

	\vspace*{1.5mm}
	Consider the positive constant
	\begin{align*}
		k_{+} := \frac{k_1 - 2 + \sqrt{k_{1}^2 + 6k_1 + 4}}{5} \approx 7.733.
	\end{align*}
	Note that $0< a < k_+$ if and only if $a < \frac{a(5a + 4)}{2(a + 1)} < k_1$. Moreover, observe that $k_+ < k_0 < k_1$. Fix any $a\in(0, k_{+})$ and any $b\in \left(\frac{a(5a + 4)}{2(a + 1)}, k_1\right)$. It is an interesting question whether the pair of normalized K\"ahler classes $(\Omega_a, \Omega_b)$ admits a coupled extremal K\"ahler metric. More generally, one can extend our results to ruled surfaces over a Riemann surface of genus $\textbf{g} \geq 2$ with a holomorphic line bundle $L$ of degree $- m$, or investigate the existence of coupled extremal metrics for general $N$-tuples on higher-dimensional ruled manifolds. We want to explore these directions in future work.
	
	\medskip
	
	\noindent\textbf{Organization of the paper:} The remainder of the paper is organized as follows. Section~\ref{sec:preliminaries} contains the necessary preliminaries, including the Calabi ansatz, the momentum profile, and the relevant intersection-theoretic computations. Section~\ref{sec:exis-coupled-extrem-ruled-surf} establishes the existence of coupled extremal K\"ahler metrics on the ruled surface and proves Theorem \ref{thm:coupled-extremal-metric-exis-1}. Finally, Section~\ref{sec:further-analysis-of-coupled-extrem-region} further analyzes the structure of the coupled extremal existence region and proves Corollary~\ref{cor:non-exis-coup-extrem-small-value-of-b} and Proposition~\ref{prop:bddness-of-the-coupled-extremal-region}.

	\vspace*{2mm}
	\section{Preliminaries}
	\label{sec:preliminaries}
	
	Let $(M,\omega)$ be a compact K\"ahler manifold of complex dimension $n$. Let $R_{i\bar j}$ denote the components of the Ricci curvature form $\mathrm{Ric}(\omega)$, and let $R^{\bar j i}$ be obtained by raising indices using the metric $g$ corresponding to the K\"ahler form $\omega$. For a variation $\omega_t := \omega + \ddbar f$ within the fixed K\"ahler class $[\omega]$, the first variation of the Calabi functional is given by (cf. \cite{Sze-bk})
	\begin{align*}
		\frac{d}{dt}\Big|_{t=0}\Cal(\om_t) &= -2 \int_M f \left(\Delta_{\omega}R(\omega) + \nabla_{j}\left(R^{\bar{k} j}\nabla_{\bar k}R(\omega)\right)\right) \omega^n \\ &= -2 \int_M f \mathcal{L}_{\omega}R(\omega)\, \omega^n.
	\end{align*}
	Here, $\mathcal{L}_{\omega} := \Delta_{\omega} + \nabla_{j}(R^{\bar{k} j}\nabla_{\bar k}\cdot)$ denotes the fourth-order Lichnerowicz operator (where $\Delta_\omega$ is the complex Laplacian). Consequently, the K\"ahler metric $\omega$ is a critical point of the Calabi functional if and only if the $(1,0)$-gradient vector field $\nabla_{\omega}^{1,0}R(\omega)$ is holomorphic.

	\vspace*{1.5mm}
	Using the average scalar curvature $\underline{R}$ of the K\"ahler class $[\omega]$, we observe that
	$$\Cal(\om) = \int_M (R(\omega) - \underline{R})^2\, \omega^n  + \underline{R}^2\int_M \omega^n.$$
	It is straightforward to see that cscK metrics minimize the Calabi functional. Furthermore, by the work of Donaldson \cite{Don2005} and Chen \cite{Chen2009}, extremal K\"ahler metrics are also global minimizers of the Calabi functional in their K\"ahler class. In particular, the scalar curvature of an extremal K\"ahler metric is constant if and only if the Futaki invariant \cite{Fut88-bk} vanishes on the given K\"ahler class.

	\subsection{Ruled surfaces over Riemann surfaces of genus 2}
	\label{subsec:ruled-surf-over-genus-2-Riemm-surf}
	
	Let $(\Sigma, \omega_\Sigma)$ be a genus $2$ Riemann surface equipped with a K\"ahler metric $\omega_\Sigma$ whose scalar curvature satisfies $R(\omega_\Sigma) = -2$. Since the Euler characteristic of $\Sigma$ is $\chi(\Sigma) = -2$, by the Gauss--Bonnet theorem we have
	$$\text{Area}(\Sigma, \omega_\Sigma) := \int_{\Sigma} \omega_\Sigma = 2\pi.$$
	Let $(L, h)$ be a degree $-1$ holomorphic line bundle over $\Sigma$ equipped with a Hermitian metric $h$ whose Chern curvature form $\gamma(h) := -\sqrt{-1}\partial\bar{\partial} \log h$ equals $-\omega_\Sigma$. In particular, $c_1(L) < 0$. By definition, the degree of $L$ being $-1$ means that $\int_{\Sigma} c_1(L) = -1$, which implies $\int_\Sigma \gamma(h) = -2\pi$ for any Hermitian metric $h$ on $L$.\footnote{Recall that $c_1(L) = \frac{1}{2\pi}[\gamma(h)]$.} Then the K\"ahler surface 
	$$X := \mathbb{P}(L \oplus \mathcal{O})$$
	is called a (minimal) \emph{ruled surface}. Note that $X$ is a $\mathbb{P}^1$-bundle over $\Sigma$ and serves as an example of a \emph{pseudo-Hirzebruch surface} (cf.~\cite{Ton98-extrem}).

	\vspace*{1.5mm}
	Let $\mathrm{C}$ denote the Poincar\'e dual of a fiber of $X$ (so that $\mathrm{C}$ is a copy of the Riemann sphere $\mathbb{S}^2$ embedded in $X$). Let $D_\infty$ (resp. $D_0$) be the image of the subbundle $L \oplus \{0\} \subset L \oplus \mathcal{O}$ (resp. $\{0\} \oplus \mathcal{O} \subset L \oplus \mathcal{O}$) under the bundle projectivization map to $X$. They are called the \emph{infinity divisor} and \emph{zero divisor} of $X$, respectively. Both $D_\infty$ and $D_0$ are isomorphic to $\Sigma$ inside $X$ (where $\Sigma$ is identified with $D_0$ as a complex curve in $X$). The following intersection formulas hold (see \cite{BHPdeVen-bk, Sze-bk, Ton98-extrem}):
\begin{equation}\label{eq:intersection formluae for ruled surface with genus 2}
\begin{split}
& \mathrm{C}^2 =0, \quad  D_{\infty}^2 =1, \quad D_{0}^2 = D_{0}\cdot [\Sigma] = -1, \\ 
& \mathrm{C}\cdot D_{\infty} = \mathrm{C}\cdot D_{0} = \mathrm{C}\cdot [\Sigma] = 1, \quad D_{\infty}\cdot D_{0} = D_{\infty}\cdot[\Sigma] = 0,\\ 
& c_1(L)\cdot [\Sigma] = -1, \quad [\om_\Sigma]\cdot [\Sigma] = 2\pi,
\end{split}
\end{equation}
where $[\Sigma] \in H^2(\Sigma, \RR)$ is the {\em fundamental class} of $\Sigma$ (identified with $D_0$ in $H^2(X, \mathbb{R})$). The last two formulas follow from $\deg(L)=-1$ and $\text{Area}(\Sigma,\om_\Sigma)=2\pi$, respectively. Since $\Sigma$ is a Riemann surface, we have $H^2(\Sigma,\RR) = H^{1,1}(\Sigma,\RR)$.

	\vspace*{1.5mm}
	By the Leray--Hirsch theorem (see \cite{Hatcher-bk}), we have 
	$$ H^2(X,\RR) = \RR \mathrm{C}\oplus \RR D_{\infty}.$$ 
	The results of \cite{Fuj92, LeBrun-Singer93, Ton98-extrem} imply that a class $\be \in H^{1,1}(X,\RR)$ is K\"ahler if and only if 
	$$\be^2 >0, \quad \be \cdot \mathrm{C} >0, \quad \be\cdot D_\infty >0, \quad \text{and} \quad \be \cdot D_0 >0.$$ 
	In particular, the \emph{K\"ahler cone} (i.e. the set of all K\"ahler classes) of $X$ is given by
	$$ \mathcal{K}(X) = \Big\{ a\mathrm{C} + b D_{\infty} \in H^{1,1}(X,\RR):~ a > 0,~ b >0\Big\}.$$

	\vspace*{1mm}
	\subsection{Calabi ansatz and momentum profile}
	\label{subsec:momentum-profile}
	
	Consider a normalized K\"ahler class $\Omega_a := 2\pi(\rm{C} + a D_\infty)$, where $a>0$. We want to construct a K\"ahler form $\om$ in this K\"ahler class satisfying the Calabi ansatz (cf. \cite{Cal82}).

	\vspace*{1.5mm}
	On $X\setminus(D_\infty \cup D_0)$ we consider the logarithm of the fiber-wise norm: 
	$$s = \ln |(z,w)|_{h}^2 = \ln h(z) + \ln |w|^2$$ where $z$ is a local coordinate on $\Sigma$ and $w$ is a fiber coordinate on $L$, and $h$ denotes the Hermitian metric on $L$ mentioned before. Let $p : L \to \Sigma$ denotes the projection map and consider the following $(1,1)$-form (cf. \cite{Cal82, Sze-bk}):
	\begin{equation}\label{eq:Calabi-ansatz-express-for-omega}
		\omega = p^\ast\omega_\Sigma + \ddbar u(s),
	\end{equation}
	where $u \in C^\infty(\mathbb{R}, \mathbb{R})$. If $u$ is a \emph{strictly convex} function such that $s \mapsto s+u(s)$ is strictly increasing, then $\omega$ is a K\"ahler form on $X\setminus(D_\infty \cup D_0)$. To extend $\omega$ to the entire space $X$ and ensure it belongs to the Kähler class $\Omega_a$, the following asymptotic conditions must be satisfied:
	\begin{equation}\label{eq:aymptotic-limit-for-derivative-of-u}
		\lim_{s \to -\infty} u'(s) = 0 \quad \text{and} \quad \lim_{s \to \infty} u'(s) = a.
	\end{equation}

	\vspace*{1.5mm}
	We make the change of coordinates $\tau := u'(s) \in [0,a]$ and set $\phi(\tau) := u''(s)$. The asymptotic conditions for $u'$ imply (see \cite{Sze-bk}):
	\begin{equation}\label{eq:boundary conditions for momentum profile}
		\phi(0) = \phi(a) = 0 \quad \text{and} \quad \phi'(0) = - \phi'(a) = 1.
	\end{equation}
	Note also that $\phi > 0$ on $(0,a)$ because $u$ is strictly convex. In general, any smooth function $\phi : [0,a] \to \mathbb{R}$ that is strictly positive on $(0,a)$ and satisfies the boundary conditions in \eqref{eq:boundary conditions for momentum profile} is called a \emph{momentum profile}. Thus, to every metric $\omega$ satisfying Calabi symmetry, there corresponds a momentum profile.

	\vspace*{1.5mm}
	As in~\cite{Sze-bk}, for a suitable choice of local trivialization of $L$, the K\"ahler metric $\omega$ is given locally by
	\begin{equation}\label{eq:local-expression-for-omega}
		\begin{split}	
			\omega = (1+\tau) p^\ast\omega_\Sigma + \phi(\tau) \frac{\sqrt{-1} dw \wedge d\bar{w}}{|w|^2}.
		\end{split}
	\end{equation}
	The volume form with respect to $\omega$ is then given by
	\begin{equation}\label{eq:vol-form-express-ruled-surface}
		\frac{\omega^2}{2} = (1+\tau)\phi(\tau)p^\ast\omega_\Sigma \wedge \frac{\ii dw \wedge d\bar{w}}{|w|^2} 
		= (1+\tau) p^\ast\omega_\Sigma \wedge d\tau \wedge d\theta.
	\end{equation}
	Here, the latter equality follows from the identity
	$$\phi(\tau)\frac{\ii dw \wedge d\bar{w}}{|w|^2} = d\tau \wedge d\theta,$$ 
	where $\theta = \arg(w)$ in polar coordinates. Indeed, writing $w = r e^{\sqrt{-1}\theta}$, we have $s = \ln \vert w\vert^2 = 2\ln r$, which yields: 
	\begin{equation*}
		\frac{\sqrt{-1} dw \wedge d\bar{w}}{|w|^2} = \frac{2}{r} dr \wedge d\theta = ds \wedge d\theta.
	\end{equation*}
	The identity then follows directly from $d\tau = \phi(\tau) ds$.

	\vspace*{1.5mm}
	Using \eqref{eq:vol-form-express-ruled-surface}, the Ricci curvature of $\omega$ is given by
	\begin{equation}\label{eq:Ric-formula-in-terms-of-momentum-profile}
		\Rc(\omega) = \left( -2 - \frac{[(1+\tau)\phi]'}{1+\tau} \right) p^\ast\omega_\Sigma  -  \left( \frac{[(1+\tau)\phi]'}{1+\tau} \right)' \phi(\tau)\frac{\ii dw \wedge d\bar{w}}{|w|^2}.
	\end{equation}
	Applying the relation $R(\omega)\omega^n = n \Rc(\omega) \wedge \omega^{n-1}$ for $n=2$, the scalar curvature of $\omega$ simplifies to
	\begin{equation}\label{eq:scal-curv-formula-in-terms-of-momentum-profile}
		R(\omega) = -\frac{2}{1+\tau} - \frac{[(1+\tau)\phi]''}{1+\tau}.
	\end{equation}

	\vspace*{1.5mm}
	We consider K\"ahler metrics $\omega\in\Omega_a := 2\pi(\mathrm{C} + a D_\infty)$ and $\chi\in\Omega_b := 2\pi(\mathrm{C} + b D_\infty)$, both satisfying the Calabi ansatz. The metric $\omega$ is locally given by \eqref{eq:local-expression-for-omega} in terms of a momentum profile $\phi$ satisfying the boundary conditions \eqref{eq:boundary conditions for momentum profile}. Similarly, since $\chi$ also satisfies the Calabi ansatz, we have
	$$ \chi = p^\ast\omega_\Sigma + \ddbar v(s)$$ for some strictly convex function $v\in C^\infty(\RR,\RR)$ such that $s \mapsto s+v(s)$ is strictly increasing. The function $v$ satisfies the following asymptotic conditions:
	$$\lim_{s \to -\infty} v'(s) = 0 \quad \text{and} \quad \lim_{s \to \infty} v'(s) = b. $$
	Furthermore, for a suitable local trivialization of the holomorphic line bundle $L$, the metric $\chi$ is locally given by (cf.~\cite{Sze-bk})
	\begin{equation}
		\label{eq:local-express-chi}
		\chi = (1+ v') p^\ast\omega_\Sigma + v'' \frac{\sqrt{-1} dw \wedge d\bar{w}}{|w|^2}.
	\end{equation}

	\vspace*{1.5mm}
	We fix the variable $\tau := u'(s) \in (0,a)$ corresponding to the metric $\omega$. Let $\psi : [0,a] \to [0,b]$ be the smooth function defined by $\psi(\tau) := v'(s)$, satisfying the boundary conditions
	\begin{equation}\label{eq:boundary conditions for psi}
		\psi(0)=0 \quad \text{and} \quad \psi(a)=b.
	\end{equation}
	Note that $\psi$ is a bijection. Differentiating $\psi(\tau) = v'(s)$ with respect to $s$ yields
	$$\psi'(\tau)\phi(\tau) = v''(s).$$ 
	Thus, the function $[0,b] \ni \sigma \mapsto \psi'(\tau)\phi(\tau)$, where $\psi(\tau)=\sigma$, is the momentum profile for the K\"ahler metric $\chi$. Consequently, from \eqref{eq:local-express-chi}, we obtain the local expression for $\chi$ in terms of its momentum profile:
	\begin{equation}\label{eq:local-expression-for-chi-momentum-profile}
		\chi =  (1+\psi(\tau)) p^\ast\omega_\Sigma + \psi'(\tau)\phi(\tau) \frac{\sqrt{-1} dw \wedge d\bar{w}}{|w|^2}.
	\end{equation}
	In particular, the volume form with respect to $\chi$ is given by
	\begin{equation}\label{eq:vol-form-wrt-chi-on-ruled-surf}
		\frac{\chi^2}{2} = (1+\psi(\tau))\psi'(\tau)\phi(\tau) p^\ast\omega_\Sigma \wedge \frac{\sqrt{-1} dw \wedge d\bar{w}}{|w|^2}.
	\end{equation}
	Moreover, from \eqref{eq:local-expression-for-omega} and \eqref{eq:local-expression-for-chi-momentum-profile}, the eigenvalues of $\chi$ with respect to $\omega$ (i.e., of the endomorphism $\omega^{-1}\chi$) are $\frac{1 + \psi(\tau)}{1+\tau}$ and $\psi'(\tau)$. Therefore, the trace of $\chi$ with respect to $\omega$ is given by
	\begin{equation}\label{eq:trace-func-of-chi-wrt-omega}
		\Lambda_{\omega} \chi = \frac{1 + \psi(\tau)}{1+\tau} + \psi'(\tau).
	\end{equation}

	\subsection{Some intersection number computations}
	
	Recall that $\omega\in\Omega_a := 2\pi(\mathrm{C} + a D_{\infty})$ and $\chi\in\Omega_b := 2\pi(\mathrm{C} + b D_{\infty})$, where $a, b > 0$. Using \eqref{eq:intersection formluae for ruled surface with genus 2}, we obtain
	\begin{equation}\label{eq:values-of-intersection-prod-normalized-Kah-class}
		\begin{split}
			[\om]^2 &= 4\pi^2 (\mathrm{C} + a D_\infty)^2 = 4\pi^2 (a^2 + 2a), \\
			[\chi]^2 &= 4\pi^2 (\mathrm{C} + b D_\infty)^2 = 4\pi^2 (b^2 + 2b), \\
			[\chi]\cdot[\om] &= 2\pi (\mathrm{C} + b D_\infty)\cdot 2\pi (\mathrm{C} + a D_\infty) = 4\pi^2 (b+a+ab).
		\end{split}
	\end{equation}
	Note that the same values can be obtained by directly integrating the local expressions \eqref{eq:local-expression-for-omega} and \eqref{eq:local-express-chi} for K\"ahler metrics $\om$ and $\chi$, respectively. Consequently, this yields
	\begin{align*}
		\ul{\chi} := 2 \frac{[\chi]\cdot[\om]}{[\om]^2} = 2 \frac{b+a+ab}{a^2+2a}.
	\end{align*}

	\vspace*{1.5mm}
	As shown in~\cite{Mete2026ccscK}, the first Chern class of the ruled surface $X = \mathbb{P}(L \oplus \mathcal{O})$ is given by $c_1(X) = -3\mathrm{C} + 2 D_\infty$, which can be derived by integrating the Ricci formula \eqref{eq:Ric-formula-in-terms-of-momentum-profile} alongside the boundary conditions \eqref{eq:boundary conditions for momentum profile} and applying the intersection formulas \eqref{eq:intersection formluae for ruled surface with genus 2}. Using \eqref{eq:intersection formluae for ruled surface with genus 2} once more, we compute
	\begin{equation*}
		2\pi c_1(X)\cdot [\om] = 2\pi (-3\mathrm{C} + 2 D_\infty) \cdot 2\pi (\mathrm{C} + a D_\infty) = 4\pi^2 (2-a).
	\end{equation*}
	In particular, the average scalar curvature is given by
	\begin{align}\label{eq:average-scal-curv-value-for-normalized-Kah-class}
		\ul{R} := 2 \frac{2\pi c_1(X)\cdot[\om]}{[\om]^2} = \frac{2(2-a)}{a^2 + 2a}.
	\end{align}

	\vspace*{2mm}
	\section{Existence of coupled extremal K\"ahler metrics}
	\label{sec:exis-coupled-extrem-ruled-surf}
	
	In this section, we establish the existence of ($2$-tuple) coupled extremal metrics on the ruled surface $X = \mathbb{P}(L\oplus \mathcal{O})$. Fix two normalized K\"ahler classes $\Omega_a := 2\pi (\mathrm{C} + a D_{\infty})$ and $\Omega_b := 2\pi (\mathrm{C} + b D_{\infty})$ with $a, b > 0$ on $X$, as before. Recall that a pair of K\"ahler metrics $(\omega,\chi)\in\Omega_{a}\times\Omega_{b}$ is called a coupled extremal K\"ahler metric if $\Rc(\omega) = \Rc(\chi)$, which is equivalent to
	\begin{equation}\label{eq:Ricci curv equal, N=2}
		\frac{\omega^2}{[\omega]^2} = \frac{\chi^2}{[\chi]^2},
	\end{equation}
	and additionally satisfies the holomorphicity condition
	\begin{equation}\label{eq:coupled extremal second equ, N=2}
		\bar{\partial} \left(\nabla^{1,0}_{\omega} (R(\omega) - \Lambda_{\omega}\chi )\right) = 0.
	\end{equation}
	Observe that when the K\"ahler classes $\Omega_a$ and $\Omega_b$ coincide, a metric $\omega$ is extremal if and only if the pair $(\omega, \omega)$ is a coupled extremal K\"ahler pair.

	\subsection{Reduction to ODEs and explicit solutions}
	
	Suppose both $\om$ and $\chi$ satisfy the Calabi ansatz. Then, using the volume form expressions \eqref{eq:vol-form-express-ruled-surface} and \eqref{eq:vol-form-wrt-chi-on-ruled-surf}, the equation \eqref{eq:Ricci curv equal, N=2} can be written as
	\begin{equation*}
		\frac{(1+\tau)\phi(\tau)}{4\pi^2(a^2 + 2a)} = \frac{(1+\psi(\tau))\psi'(\tau)\phi(\tau)}{4\pi^2(b^2 + 2b)},
	\end{equation*}
	which simplifies to the following first-order ordinary differential equation (ODE)
	\begin{equation}\label{eq:ODE for all Ricci equal, N=2}
		(1+\psi(\tau)) \psi'(\tau) = \frac{b^2 + 2b}{a^2 + 2a} (1+\tau).
	\end{equation}
	The general solution to \eqref{eq:ODE for all Ricci equal, N=2} takes the form
	\begin{equation*}
		(1+\psi(\tau))^2 = \frac{b^2 + 2b}{a^2 + 2a} (1+\tau)^2 + C_1,
	\end{equation*}
	where $C_1$ is a constant of integration. Applying the boundary condition $\psi(0) = 0$ from \eqref{eq:boundary conditions for psi} yields
	$$ C_1 = \frac{(a-b)(a+b+2)}{a^2 + 2a}.$$
	Therefore, the unique positive solution to the ODE \eqref{eq:ODE for all Ricci equal, N=2} satisfying the boundary conditions \eqref{eq:boundary conditions for psi} is given by
	\begin{equation}\label{eq:unique-positive-solution-psi}
		\psi(\tau) = -1 + \sqrt{\frac{b^2 + 2b}{a^2 + 2a}(\tau^2 + 2\tau) + 1}.
	\end{equation}

	\vspace*{1.5mm}
	Next, from \eqref{eq:scal-curv-formula-in-terms-of-momentum-profile} and \eqref{eq:trace-func-of-chi-wrt-omega}, we observe that the function $R(\omega) - \Lambda_{\omega}\chi$ depends solely on the variable $\tau$. More precisely,
	\begin{align*}
		R(\om) - \Lambda_{\om}\chi 
		= -\frac{ 2 + [(1+\tau)\phi(\tau)]''}{1+\tau} - \frac{[(1+\psi(\tau))(1+\tau)]'}{1+\tau},
	\end{align*}
	where the primes denote derivatives with respect to $\tau$. For any function $f(\tau)$ of the variable $\tau$, we have (cf.~\cite[p.~74]{Sze-bk})
	$$\nabla^{1,0}_{\om}f(\tau)=f'(\tau)w\frac{\d}{\d w},$$ 
	where $w$ is a fiber coordinate on the line bundle $L$. Then the holomorphic condition $\bar{\partial} (\nabla^{1,0}_{\omega}f(\tau)) = 0$ implies that $f'(\tau)$ must be constant; that is, $f(\tau) = A \tau + B$ for some constants $A, B \in \mathbb{R}$. Consequently, equation \eqref{eq:coupled extremal second equ, N=2} reduces to the following second-order ODE:
	\begin{equation}\label{eq:ODE coupled extremal, N=2}
		[(1+\tau)\phi(\tau)]'' + [(1+\psi(\tau))(1+\tau)]' = -(A\tau^2 + (A+B)\tau + B + 2),
	\end{equation}
	where $A, B \in \mathbb{R}$ are constants whose explicit values will be determined later.

	\vspace*{1.5mm}
	Integrating the ODE \eqref{eq:ODE coupled extremal, N=2}, applying the boundary conditions \eqref{eq:boundary conditions for momentum profile} and \eqref{eq:boundary conditions for psi}, and using \eqref{eq:unique-positive-solution-psi} yields
	\begin{equation}\label{eq:solution-of-ODE-for-holomorphicity-1st-derivative-of-phi}
		\begin{split}
			&[(1+\tau)\phi(\tau)]' + (1 + \tau)\sqrt{\frac{b^2 + 2b}{a^2 + 2a}(\tau^2 + 2\tau) + 1}  \\
			&\qquad = -A\frac{\tau^3}{3} - (A+B)\frac{\tau^2}{2} - (B + 2)\tau + 2. 
		\end{split}
	\end{equation}
	Integrating once more and applying \eqref{eq:boundary conditions for momentum profile} again, we find that the solution to \eqref{eq:ODE coupled extremal, N=2} is given by
	\begin{equation}\label{eq:solution-of-ODE-for-holomorphicity-involving-A-B}
		\begin{split}
			&(1 +\tau)\phi(\tau) + \frac{a^2 + 2a}{3(b^2 + 2b)}\left(\frac{b^2 + 2b}{a^2 + 2a}(\tau^2 + 2\tau) + 1 \right)^{\frac{3}{2}} \\ 
			&\qquad = -A\frac{\tau^4}{12} - (A+B)\frac{\tau^3}{6} - (B + 2)\frac{\tau^2}{2} + 2\tau + \frac{a^2 + 2a}{3(b^2 + 2b)}.
		\end{split}
	\end{equation}

	\vspace*{1.5mm}
	Now we determine the values of the constants $A$ and $B$. Setting $\tau = a$ in \eqref{eq:solution-of-ODE-for-holomorphicity-1st-derivative-of-phi} and applying the boundary conditions \eqref{eq:boundary conditions for momentum profile}, we obtain
	\begin{equation}\label{eq:solving-constants-A-and-B-1st-equ}
		A \frac{a^2(2a+3)}{6} + B \frac{a(a+2)}{2} = 2 - 2a - b(1+a).
	\end{equation}
	On the other hand, setting $\tau = a$ in \eqref{eq:solution-of-ODE-for-holomorphicity-involving-A-B} and simplifying yields
	\begin{equation}\label{eq:solving-constants-A-and-B-2nd-equ}
		A \frac{a^2(a+2)}{12} + B \frac{a(a + 3)}{6} = \frac{-(a+2)b^2 - 6ab + 6 - 9a}{3(b+2)}.
	\end{equation}
	Solving the system of linear equations \eqref{eq:solving-constants-A-and-B-1st-equ} and \eqref{eq:solving-constants-A-and-B-2nd-equ} for $A$ and $B$, we find
	\begin{equation}\label{eq:explicit-value-of-constants-A-and-B}
		\begin{split}
			A &= \frac{12(b^2 + 2a^2 b + 5a^2 + 4a)}{a^2(a^2 + 6a + 6)(b+2)}, \\
			B &= \frac{- 2(a^2 + 5a + 6)b^2 - 24a(a+1)b + 12(- 4a^2 - 3a + 2)}{a(a^2 + 6a + 6) (b+2)}.
		\end{split}
	\end{equation}
	Therefore, from \eqref{eq:solution-of-ODE-for-holomorphicity-involving-A-B}, the unique solution satisfying \eqref{eq:boundary conditions for momentum profile} is given by
	\begin{equation}\label{eq:final-sol-expression-of-phi-involving-a-and-b}
		\begin{split}
			(1 +\tau)\phi(\tau) &= -\frac{b^2 + 2a^2 b + 5a^2 + 4a}{a^2(a^2 + 6a + 6)(b+2)}\tau^4 \\ 
			&\quad  + \frac{(a^3 + 5a^2 + 6a -6)b^2 + 12a^3b + 12a(2a^2 - a -3)}{3a^2(a^2 + 6a + 6)(b+2)}\tau^3 \\ &\quad - \frac{-(a^2 + 5a +6)b^2 + a(a^2 - 6a - 6)b + 2(a^3 - 6a^2 - 3a + 6)}{a(a^2 + 6a + 6)(b + 2)}\tau^2 \\
			&\quad  + 2\tau + \frac{a^2 + 2a}{3(b^2 + 2b)} - \frac{a^2 + 2a}{3(b^2 + 2b)}\left(\frac{b^2 + 2b}{a^2 + 2a}(\tau^2 + 2\tau) + 1 \right)^{\frac{3}{2}}.
		\end{split}
	\end{equation}

	\subsection{Special case ($a = b$)}
	\label{subsec:special case a equals b}
	
	In this case, the K\"ahler classes $\Omega_a$ and $\Omega_b$ coincide. Taking $\chi = \om$, it is straightforward to see that $(\om, \chi)$ is a coupled extremal K\"ahler metric if and only if $\om$ is an extremal K\"ahler metric. Setting $a=b$, the unique solution $\phi$ given in \eqref{eq:final-sol-expression-of-phi-involving-a-and-b} simplifies to:
	\begin{align*}
		(1+\tau)\phi(\tau) = - \frac{2(a+1)}{a(a^2 + 6a + 6)}\tau^4 + \frac{3a^2 - 2a - 6}{a(a^2 + 6a + 6)}\tau^3 + \frac{- a^3 + 3 a^2 - 6}{a(a^2 + 6a + 6)}\tau^2 + \tau 
	\end{align*}
	Factoring out $\tau(a - \tau)$, this expression further reduces to (cf. \cite[p.~75]{Sze-bk}):
	\begin{equation}
		\phi(\tau) = \frac{\tau (a - \tau)}{a(a^2 + 6a + 6)(1+\tau)}\Big((2a + 2)\tau^2 + ( -a^2 + 4a + 6)\tau + (a^2 + 6a + 6)\Big).
	\end{equation}
	It is well known that $\phi(\tau) > 0$ for all $\tau \in (0,a)$ if and only if $0 < a < k_1$, where $k_1 \approx 18.889$ is the unique positive root of the quartic polynomial $\wp(x)$ given in \eqref{eq:quartic-polynomial-std-extremal-metric}. Consequently, the K\"ahler class $\Omega_a$ on the ruled surface $X$ admits an extremal K\"ahler metric with non-constant scalar curvature if and only if $0< a < k_1$.

	\vspace*{1.5mm}
	Furthermore, $X$ is not \emph{relatively K-stable} when $a \geq k_1$; hence, the K\"ahler class $\Omega_a$ does not admit an extremal K\"ahler metric in this range (cf. \cite[Section~6.5]{Sze-bk}; see also \cite{ACGTF2008-invent}). In~\cite{Sze2009-Cal-func-ruled}, Sz\'ekelyhidi studied the minimization problem of the Calabi functional on $\Omega_a$ for $a \geq k_1$. He proved that a minimizing sequence of metrics breaks $X$ into two pieces when $k_1 \leq a \leq k_{2}(k_{2} + 2)$, and into three pieces when $a > k_{2}(k_{2} + 2)$. Here, $k_{2}\approx 5.0275$ is the only positive real root of the cubic polynomial $x^3 - 3x^2 - 9x - 6$.

	\subsection{General Coupled Case ($a \neq b$)}
	
	For brevity, let $F(\tau) := (1+\tau)\phi(\tau)$. Then, from \eqref{eq:final-sol-expression-of-phi-involving-a-and-b}, the solution function $F$ takes the form
	\begin{equation}\label{eq:solution-function-F}
		F(\tau) = P_4(\tau) + \frac{a^2 + 2a}{3(b^2 + 2b)} \left( 1 - \left(\frac{b^2 + 2b}{a^2 + 2a}(\tau^2 + 2\tau) + 1\right)^{3/2} \right),
	\end{equation}
	where $P_4(\tau) := -A_4 \tau^4 + A_3 \tau^3 - A_2 \tau^2 + 2\tau$ is a fourth-degree polynomial with coefficients
	\begin{align*}
		A_4 &:= \frac{b^2 + 2a^2b + 5a^2 + 4a}{a^2(a^2 + 6a + 6)(b + 2)}, \\
		A_3 &:= \frac{(a^3 + 5a^2 + 6a - 6)b^2 + 12a^3b + 12a(2a^2 - a - 3)}{3a^2(a^2 + 6a + 6)(b + 2)}, \\
		A_2 &:= \frac{-(a^2 + 5a + 6)b^2 + a(a^2 - 6a - 6)b + 2(a^3 - 6a^2 - 3a + 6)}{a(a^2 + 6a + 6)(b + 2)}.
	\end{align*}
	Because $\phi$ satisfies the boundary conditions \eqref{eq:boundary conditions for momentum profile}, $F$ satisfies
	\begin{equation}\label{eq:bdd-cond-for-solution-function-F}
		F(0) = 0 = F(a), \quad F'(0)=1, \quad \text{and} \quad F'(a) = -(1 + a).
	\end{equation}
	These boundary conditions imply that $F(\tau) > 0$ both for sufficiently small $\tau > 0$ and for $\tau < a$ sufficiently close to $a$. By Rolle's theorem, if $F$ has a zero in $(0, a)$, then $F$ must have at least two critical points and $F'$ must have at least one critical point in $(0, a)$.

	\vspace*{1.5mm}
	To emphasize the dependence on parameters $a$ and $b$, we write $F(a, b, \tau)$ for $F(\tau)$. That is, for any $(x, y) \in \mathbb{R}_{>0}^2$, the function $F(x, y, \cdot) : [0, x] \to \mathbb{R}$ is defined by \eqref{eq:solution-function-F} with $a$ and $b$ replaced by $x$ and $y$, respectively. In particular, $F(x, y, \cdot)$ satisfies the following boundary conditions
	\begin{equation}\label{eq:bdd-cond-for-F-general-x-y}
		F(x, y, 0) = F(x, y, x) = 0, \qquad  \frac{\partial F}{\partial\tau}(x, y, 0) = - \frac{1}{1 + x}\frac{\partial F}{\partial\tau}(x, y, x) = 1.
	\end{equation}
	We define the set
	\begin{equation}\label{eq:defn-of-sols-set-S}
		\mathcal{S}_{\mathrm{ext}} := \Big\{(x, y)\in\mathbb{R}_{>0}^2 :~ F(x, y, \tau) > 0 \quad \text{for all} \quad \tau\in (0, x)\Big\}.
	\end{equation} 
	
\medskip
	
The following properties hold for this set.
	
\begin{lem}\label{lem:sols-set-contains-diagonal-segment}
The set $\mathcal{S}_{\mathrm{ext}}$ contains the diagonal segment $\Delta_{k_1}$, defined in \eqref{eq:diagonal-segment-std-extremal-metric}.
\end{lem}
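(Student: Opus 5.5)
The plan is to reduce the claim to the explicit special case $a=b$ from Subsection~\ref{subsec:special case a equals b}, and then check by hand that the quadratic factor appearing there stays positive on $(0,x)$ whenever $0<x<k_1$.

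\textbf{Step 1 (specialization).} Set $a=b=x$ in \eqref{eq:solution-function-F}. The ratio $\frac{b^2+2b}{a^2+2a}$ equals $1$, so the square root becomes $\sqrt{\tau^2+2\tau+1}=1+\tau$. The non-polynomial term then collapses to
\[
\tfrac13\bigl(1-(1+\tau)^3\bigr)=-\tau-\tau^2-\tfrac{\tau^3}{3}.
\]
Substituting $b=x$ into $A_4,A_3,A_2$ and combining with $P_4$ should recover the quartic of Subsection~\ref{subsec:special case a equals b}. Factoring out $\tau(x-\tau)$ then gives
\[
F(x,x,\tau)=\frac{\tau(x-\tau)}{x(x^2+6x+6)}\,q(\tau),\qquad q(\tau):=(2x+2)\tau^2+(-x^2+4x+6)\tau+(x^2+6x+6).
\]
I would verify this identity by comparing coefficients of $\tau^4,\tau^3,\tau^2,\tau$. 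This is routine, but it is the step where an algebra slip is most likely.

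\textbf{Step 2 (reduction to $q$).} For $\tau\in(0,x)$ the factor $\tau(x-\tau)$ and the denominator are positive. So it suffices to show $q>0$ on $(0,x)$, and in fact on $[0,x]$. At the endpoints,
\[
q(0)=x^2+6x+6>0,\qquad q(x)=x^3+7x^2+12x+6>0.
\]
Since the leading coefficient $2x+2$ is positive, $q$ is convex. Hence $q$ can fail to be positive on $[0,x]$ only if it has two real roots, i.e.\ its discriminant is nonnegative.

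\textbf{Step 3 (discriminant).} Compute the discriminant:
\[
(x^2-4x-6)^2-8(x+1)(x^2+6x+6)=x^4-16x^3-52x^2-48x-12=\wp(x).
\]
We have $\wp(0)=-12<0$, and $k_1$ is the unique positive root of $\wp$. Therefore $\wp(x)<0$ for every $0<x<k_1$. So $q$ has no real roots and is strictly positive everywhere, which gives $F(x,x,\tau)>0$ on $(0,x)$, i.e.\ $(x,x)\in\mathcal{S}_{\mathrm{ext}}$.

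The only genuine obstacle is the bookkeeping in Step 1. In particular, one must confirm that the general coefficients $A_4,A_3,A_2$ reduce exactly to the special-case coefficients, for instance $A_4|_{b=a}=\frac{2(a+1)}{a(a^2+6a+6)}$, which follows from $2a^3+5a^2+4a+a^2=2a(a+1)(a+2)$. Once that identity is in hand, Steps 2 and 3 are elementary.
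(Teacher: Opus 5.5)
Your proposal is correct and takes the same route as the paper: specialize to $a=b$, where $F(x,x,\cdot)$ becomes the classical extremal profile $\tau(x-\tau)q(\tau)/\bigl(x(x^2+6x+6)\bigr)$. The paper simply invokes the well-known fact (via Sz\'ekelyhidi's book) that this profile is positive on $(0,x)$ exactly when $0<x<k_1$, whereas you check the coefficient reduction yourself and prove the needed direction directly by showing that the discriminant of $q$ is exactly $\wp(x)$. Your computations are correct, so your argument is self-contained where the paper relies on a citation.
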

	
\begin{proof}
When $x = y$ (i.e., when the K\"ahler classes $\Omega_x$ and $\Omega_y$ coincide), the coupled extremal system reduces to the classical extremal K\"ahler metric problem on $X$. As established in Section~\ref{subsec:special case a equals b}, $F(x, x, \tau) > 0$ for all $\tau\in(0, x)$ if and only if $0 < x < k_1$. Consequently, the diagonal segment $\Delta_{k_1}$ is contained in $\mathcal{S}_{\mathrm{ext}}$. 
\end{proof}

\begin{lem}\label{lem:openness-of-the-sols-set-F}
The set $\mathcal{S}_{\mathrm{ext}}$ is an open subset of $\mathbb{R}_{>0}^2$.
\end{lem}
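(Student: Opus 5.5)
The plan is to show that if $(a,b)\in\mathcal{S}_{\mathrm{ext}}$, then every nearby $(x,y)$ also lies in $\mathcal{S}_{\mathrm{ext}}$. The difficulty is that positivity of $F(a,b,\cdot)$ on the open interval $(0,a)$ is not stable under perturbation by itself, because $F$ vanishes at both endpoints. We therefore remove the endpoint zeros. Rescale the variable by $\tau = x t$ with $t\in[0,1]$, so all domains become $[0,1]$, and define
$$G(x,y,t) := \frac{F(x,y,xt)}{x\,t(1-t)}, \qquad t\in(0,1).$$
Since $F(x,y,0)=F(x,y,x)=0$ and $F$ is smooth in $\tau$, we can write
$$F(x,y,\tau)=\tau\int_0^1 \partial_\tau F(x,y,s\tau)\,ds,$$
and similarly at the other endpoint. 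It follows that $G$ extends continuously, even smoothly, to $[0,1]$, with
$$G(x,y,0)=\partial_\tau F(x,y,0)=1, \qquad G(x,y,1)=-\partial_\tau F(x,y,x)=1+x,$$
by \eqref{eq:bdd-cond-for-F-general-x-y}.

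The first thing to verify is joint smoothness, or at least continuity, of $F$ in $(x,y,\tau)$ on $\mathbb{R}_{>0}^2\times\mathbb{R}_{\ge0}$. This is immediate from the explicit formula \eqref{eq:solution-function-F}. The coefficients $A_4,A_3,A_2$ are rational functions whose denominators $x^2(x^2+6x+6)(y+2)$ do not vanish for $x,y>0$. The term
$$\tfrac{x^2+2x}{3(y^2+2y)}\Bigl(1-\bigl(\tfrac{y^2+2y}{x^2+2x}(\tau^2+2\tau)+1\bigr)^{3/2}\Bigr)$$
is smooth, since the radicand is at least $1$. Hence $(x,y,t)\mapsto F(x,y,xt)$ is smooth on $\mathbb{R}_{>0}^2\times[0,1]$. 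The Taylor-integral representation above, written as a double integral of $\partial_\tau^2F$ or a similar expression, then shows that $G$ is continuous on $\mathbb{R}_{>0}^2\times[0,1]$. This step needs care: the division by $t(1-t)$ must produce a jointly continuous function, not just one that is continuous for each fixed $(x,y)$. The integral formula handles this, because it expresses $G$ through integrals of smooth functions with parameters.

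Now fix $(a,b)\in\mathcal{S}_{\mathrm{ext}}$. Then $G(a,b,t)>0$ for $t\in(0,1)$, and $G(a,b,0)=1$, $G(a,b,1)=1+a$. So $G(a,b,\cdot)$ is strictly positive on the compact interval $[0,1]$, and we set $m:=\min_{[0,1]}G(a,b,\cdot)>0$. Continuity of $G$ on the compact set $\overline{U}\times[0,1]$, where $U$ is a small closed ball around $(a,b)$ contained in $\mathbb{R}_{>0}^2$, gives uniform continuity there. Hence there is a neighborhood $V$ of $(a,b)$ with $|G(x,y,t)-G(a,b,t)|<m/2$ for all $t\in[0,1]$ and all $(x,y)\in V$. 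This gives $G(x,y,t)>m/2>0$, so
$$F(x,y,\tau)=x\,t(1-t)\,G(x,y,t)>0 \quad\text{for } \tau=xt\in(0,x).$$
Thus $V\subset\mathcal{S}_{\mathrm{ext}}$, which proves openness.
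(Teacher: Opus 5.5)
Your proof is correct, but it is organized differently from the paper's. The paper works with $F$ directly and splits $(0,x)$ into three pieces. On $(0,\epsilon]$ and $[x-\epsilon,x)$ it uses Taylor's theorem, with a uniform bound $M$ on $\partial_\tau^2 F$ over a compact neighborhood of $(x_0,y_0)$, together with $\partial_\tau F(x,y,0)=1$ and $\partial_\tau F(x,y,x)=-(1+x)$, to get positivity uniformly in $(x,y)$. On the middle piece $[\epsilon, x_0-\epsilon/2]$ it uses the positive minimum of $F(x_0,y_0,\cdot)$ and uniform continuity. It must then take $\delta<\epsilon/2$ so that the three pieces still cover the moving interval $(0,x)$.

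You instead rescale to the fixed interval $[0,1]$ and divide out the two simple zeros forced by the boundary conditions. The endpoint derivative information then becomes the boundary values $G(x,y,0)=1$ and $G(x,y,1)=1+x$ of a jointly continuous function, and a single compactness argument finishes. The cost of your route is joint continuity of the quotient at $t=0$ and $t=1$, which you rightly flag. It follows from your integral representations: use the one based at $\tau=0$ on $\{t<1\}$ and the one based at $\tau=x$ on $\{t>0\}$. The paper avoids any division and gets explicit constants in terms of $M$. Your version removes the $\epsilon$--$\delta$ covering bookkeeping and gives the conclusion a transparent form: $F(x,y,\tau) > \frac{m}{2x}\,\tau(x-\tau)$ on $(0,x)$ for all $(x,y)$ near $(a,b)$.
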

	
\begin{proof}
Let $(x_0, y_0) \in \mathcal{S}_{\mathrm{ext}}$ be fixed. By the momentum construction, for any $(x, y) \in \mathbb{R}_{>0}^2$, the function $F(x, y, \cdot): [0,x]\longrightarrow\mathbb{R}$ satisfies the boundary conditions \eqref{eq:bdd-cond-for-F-general-x-y}. Since $(x,y) \mapsto F(x,y,\tau)$ and the derivative of $F$ up to second order in $\tau$ depend smoothly on $(x, y, \tau)$, there exists a compact neighborhood $V \subset \mathbb{R}_{>0}^2$ of $(x_0, y_0)$, a radius $\delta_0 > 0$ such that $B_{\delta_0}(x_0, y_0) \subset V$, and a constant $M > 0$ such that for all $(x, y) \in V$, we have
$$x > \frac{x_0}{2} \quad \text{and} \quad \left|\frac{\partial^2 F}{\partial\tau^2}(x,y,\tau)\right| \leq M \quad \text{ for all } \tau \in (0, x).$$
Choose any $\epsilon$ such that $0< \epsilon < \min\left\{\frac{2}{M}, \frac{2 + x_0}{M}, \frac{x_0}{4}\right\}$. Applying Taylor's theorem with remainder about $\tau = 0$, we have
$$F(x, y, \tau) = \tau + \frac{1}{2}\frac{\partial^2 F}{\partial \tau^2}(x, y, \xi)\tau^2 \geq \tau\left(1 - \frac{M}{2}\tau\right) > 0$$
for all $(x, y) \in V$ and all $\tau \in (0, \epsilon]$. Similarly, applying Taylor's theorem about $\tau = x$, we get
$$F(x, y, \tau) = (1 + x)(x - \tau) + \frac{(\tau - x)^2}{2}\frac{\partial^2 F}{\partial \tau^2}(x, y, \xi') \geq (1 + x)(x - \tau) - \frac{M}{2}(x - \tau)^2 > 0$$
for all $(x, y) \in V$ and all $\tau \in [x - \epsilon, x)$. 

On the other hand, $F(x_0, y_0, \tau) > 0$ for all $\tau\in(0, x_0)$ (because $(x_0, y_0) \in \mathcal{S}_{\mathrm{ext}}$), and hence
$$m_0 := \min_{\tau \in [\epsilon, \, x_0 - \frac{\epsilon}{2}]} F(x_0, y_0, \tau) > 0.$$
Now, $F$ is uniformly continuous on the compact set $K := V \times [\epsilon, x_0 - \frac{\epsilon}{2}]$. Thus, there exists $\delta \in \left(0, \min\{\delta_0, \frac{\epsilon}{2}\}\right)$ such that for all $(x, y)$ satisfying $\|(x, y) - (x_0, y_0)\| < \delta$ (and hence $(x, y) \in V$), we have
$$|F(x, y, \tau) - F(x_0, y_0, \tau)| < \frac{m_0}{2} \quad \text{for all } \tau \in \left[\epsilon, x_0 - \frac{\epsilon}{2}\right],$$
which implies $F(x, y, \tau) > \frac{m_0}{2} > 0$ on this interval.
		
Furthermore, the condition $\vert x - x_0\vert < \delta < \frac{\epsilon}{2}$ implies $x - \epsilon < x_0 - \frac{\epsilon}{2} < x$, so the intervals $(0, \epsilon]$, $\left[\epsilon, x_0 - \frac{\epsilon}{2}\right]$, and $[x - \epsilon, x)$ together cover $(0, x)$. Consequently, for all $(x, y)\in B_{\delta}(x_0, y_0)$, we have $F(x, y, \tau) > 0$ for all $\tau \in (0, x)$. This concludes the proof that $\mathcal{S}_{\mathrm{ext}}$ is open in $\mathbb{R}_{>0}^2$
\end{proof}
	
\begin{rem}
By Lemma \ref{lem:openness-of-the-sols-set-F}, for any fixed $x \in (0, k_1)$, there exists an open interval $I_x := (y_{-}(x), y_{+}(x))$ containing $x$ such that $F(x,y,\tau) > 0$ on $(0, x)$ for all $y \in I_x$. 
\end{rem}

	\vspace*{1.5mm}
	We now prove our main Theorem \ref{thm:coupled-extremal-metric-exis-1}.
	
	\begin{proof}[\textbf{Proof of Theorem \ref{thm:coupled-extremal-metric-exis-1}}]
		Let $\Omega_a = 2\pi(\mathrm{C} + a D_\infty)$ and $\Omega_b = 2\pi(\mathrm{C} + b D_\infty)$ be K\"ahler classes on the ruled surface $X = \mathbb{P}(L \oplus \mathcal{O})$ with $a > 0$ and $b > 0$. We seek a pair of coupled extremal K\"ahler metrics $(\omega, \chi) \in \Omega_a \times \Omega_b$ satisfying the Calabi ansatz. Under the Calabi ansatz, the coupled extremal metric system \eqref{eq:Ricci curv equal, N=2} and \eqref{eq:coupled extremal second equ, N=2} reduces to a pair of ordinary differential equations \eqref{eq:ODE for all Ricci equal, N=2} and \eqref{eq:ODE coupled extremal, N=2}, respectively, involving the momentum profile $\phi(\tau)$ associated with $\omega$ and a function $\psi(\tau)$ related to the momentum profile of $\chi$. The unique solution $\psi$ of \eqref{eq:ODE for all Ricci equal, N=2} satisfying the boundary condition \eqref{eq:boundary conditions for psi} is given by (see \eqref{eq:unique-positive-solution-psi})
		$$\psi(\tau) = -1 + \sqrt{\frac{b^2+2b}{a^2+2a}(\tau^2+2\tau)+1}.$$
		Observe that $\psi$ is increasing and positive on $(0,a)$. We denote $F(\tau) := (1+\tau)\phi(\tau)$. Then, substituting $\psi(\tau)$ into the second-order ODE \eqref{eq:ODE coupled extremal, N=2} and solving it with the boundary conditions \eqref{eq:bdd-cond-for-solution-function-F} yields (see \eqref{eq:solution-function-F})
		$$F(\tau) = P_4(\tau) + \frac{a^2+2a}{3(b^2+2b)}\left(1 - \left(\frac{b^2+2b}{a^2+2a}(\tau^2+2\tau)+1\right)^{3/2}\right).$$
		By construction, $(\omega, \chi)$ form a pair of coupled extremal K\"ahler metrics if and only if the function $\phi(\tau) = \frac{F(\tau)}{1+\tau}$ defines a valid momentum profile on $(0, a)$, which holds if and only if $F(\tau) > 0$ for all $\tau \in (0, a)$. Now, the set $\mathcal{S}_{\mathrm{ext}}$, defined in \eqref{eq:defn-of-sols-set-S}, is an open set in $\mathbb{R}_{>0}^2$ by Lemma \ref{lem:openness-of-the-sols-set-F}, and it contains the diagonal segment $\Delta_{k_1}$ by Lemma \ref{lem:sols-set-contains-diagonal-segment}. Thus, a pair $(\omega, \chi)$ of coupled extremal K\"ahler metrics satisfying the Calabi ansatz exists if and only if $(a, b) \in \mathcal{S}_{\mathrm{ext}}$.
	\end{proof}

	\vspace*{2mm}
	\section{Further analysis of the coupled extremal region}
	\label{sec:further-analysis-of-coupled-extrem-region}
	
	We have proved above that the coupled extremal existence region $\mathcal{S}_{\mathrm{ext}}$, defined in \eqref{eq:defn-of-sols-set-S}, is open in $\mathbb{R}^{2}_{> 0}$ and contains the diagonal segment $\Delta_{k_1}$. In this section, we aim to further analyze the structure of $\mathcal{S}_{\mathrm{ext}}$ and describe its boundary.

	\vspace*{1.5mm}
	For notational convenience, we set
	$$\eta(x,y) := \frac{y^2 + 2y}{x^2 + 2x}, \quad \text{and} \quad \zeta(x, y, \tau) := \sqrt{1 + \eta(x, y)(\tau^2 + 2\tau)}$$
	for $(x, y) \in \mathbb{R}^{2}_{> 0}$ and $\tau \in (0, x)$. Then the explicit formula \eqref{eq:solution-function-F} for $F$ (with $a$ and $b$ replaced by $x$ and $y$, respectively) can be expressed as
	\begin{equation}
		\label{eq:solution-F-shorter-formula}
		F(x, y, \tau) = P_4(x, y, \tau) + \frac{1-\zeta(x, y, \tau)^3}{3\eta(x, y)}.
	\end{equation}
	Recall that the point $(x, y)$ belongs to $\mathcal{S}_{\mathrm{ext}}$ if $F(x, y, \tau) > 0$ for all $\tau \in (0, x)$.

	\vspace*{1.5mm}
	The behaviour of $F$ for small $y > 0$ provides useful information about the lower boundary of $\mathcal{S}_{\mathrm{ext}}$. Clearly, $\eta(x, y)\to 0$, and hence $\zeta(x, y, \tau)\to 1$, as $y\to 0^{+}$. Moreover, we have
	$$\frac{\partial \zeta}{\partial y}(x, y, \tau) = \frac{\tau^2 + 2\tau}{2 \zeta(x, y, \tau)} \frac{\partial\eta}{\partial y}(x, y).$$
	Using L'H\^{o}pital's rule, we have
	$$\lim_{y\to 0^+} \frac{1 - \zeta(x, y, \tau)^3}{3 \eta(x, y)} = - \frac{\tau(\tau + 2)}{2}.$$
	On the other hand, a straightforward computation yields
	\begin{align*}
		\lim_{y\to 0^+} P_4(x, y, \tau) &= \frac{\tau}{2x(x^2 + 6x + 6)}\Big(-(5x + 4)\tau^3 + 4(2x^2 - x - 3)\tau^2 \\& \qquad\qquad - 2(x^3 - 6x^2 - 3x + 6)\tau + 4x(x^2 + 6x + 6)\Big).
	\end{align*}
	Combining these, from \eqref{eq:solution-F-shorter-formula}, we obtain
	\begin{equation}\label{eq:limit-of-F-when-y-goes-to-zero}
		\lim_{y\to 0^+} F(x, y, \tau) = \frac{\tau(x - \tau)}{2x(x^2 + 6x + 6)} G_x(\tau),
	\end{equation}
	where 
	$$G_x(\tau) : = (5x + 4)\tau^2 + (-3x^2 + 8x + 12)\tau + 2(x^2 + 6x + 6).$$
	The discriminant of the quadratic polynomial $G_x$ is (see \eqref{eq:quartic-polynomial-cor-non-exis})
	\begin{align*}
		\vartheta(x) = 9x^4-88x^3-280x^2-240x-48.
	\end{align*}
	Let $k_0 \approx 12.451$ be the unique positive root of the polynomial $\vartheta(x)$. At $x = k_0$, the quadratic $G_{k_0}$ has a double root
	\begin{equation*}
		\tau_{k_0} := \frac{3k_0^2 - 8k_0 - 12}{2(5k_0 + 4)}\approx 2.668.
	\end{equation*}

	\vspace*{1.5mm}
	For every $0< x < k_0$, we have $\vartheta(x) < 0$, which implies that $G_x(\tau) > 0$ for all $\tau\in\mathbb{R}$. Consequently, 
	$$\lim_{y\to 0^+} F(x, y, \tau) > 0$$ 
	for all $x \in (0, k_0)$ and all $\tau \in (0, x)$. Note, however, that for each fixed $x\in (0, k_0)$, this positive limit approaches zero as either $\tau\to 0^+$ or $\tau\to x^{-}$.

	\vspace*{1.5mm}
	\begin{proof}[\textbf{Proof of Corollary~\ref{cor:non-exis-coup-extrem-small-value-of-b}}]
		If $x > k_0$, then $\vartheta(x) > 0$. In this case, the quadratic polynomial $G_x$ has two distinct real roots in $(0, x)$, and $G_x$ is negative between them. In particular, for every $x > k_0$, there exists a $\tau_{x} \in (0, x)$ such that
		$$\lim_{y\to 0^+} F(x, y, \tau_x) < 0.$$
		By the continuity of $F$, this implies that $F(x, y, \tau_x) < 0$ whenever $y>0$ is sufficiently small. Consequently, for every $x > k_0$, $(x, y)\notin \mathcal{S}_{\mathrm{ext}}$ for all sufficiently small $y > 0$. Therefore, by Theorem~\ref{thm:coupled-extremal-metric-exis-1}, we conclude that for every $a > k_0$, there exists a $\delta_a > 0$ such that the pair of normalized K\"ahler classes $(\Omega_a, \Omega_b)$ does not admit a coupled extremal K\"ahler metric satisfying the Calabi ansatz whenever $b \in (0, \delta_a)$.
	\end{proof}

	\vspace*{1.5mm}
	We next investigate the opposite regime, namely the behaviour of $F$ when $y$ is large. For fixed $x>0$ and fixed $\tau\in(0,x)$, we obtain
	\begin{align*}
		\lim_{y\to\infty} \frac{\zeta(x, y, \tau)^3}{3y\,  \eta(x, y)} &= \frac{(\tau^2 + 2\tau)^{3/2}}{3 \sqrt{x^2 + 2x}}; \\
		\lim_{y\to\infty} \frac{P_4(x, y, \tau)}{y} &= -\frac{1}{x^2(x^2 + 6x + 6)}\tau^4 + \frac{x^3 + 5x^2 + 6x - 6}{3x^2(x^2 + 6x + 6)}\tau^3 + \frac{(x + 2)(x + 3)}{x(x^2 + 6x + 6)}\tau^2.
	\end{align*}
	Consequently, from \eqref{eq:solution-F-shorter-formula}, we have
	\begin{equation}\label{eq:limit-of-F-by-y-when-y-goes-to-infinity}
		\begin{split}
			\lim_{y\to\infty} \frac{F(x, y, \tau)}{y} &= -\frac{1}{x^2(x^2 + 6x + 6)}\tau^4 + \frac{x^3 + 5x^2 + 6x - 6}{3x^2(x^2 + 6x + 6)}\tau^3 \\ &\qquad \quad + \frac{(x + 2)(x + 3)}{x(x^2 + 6x + 6)}\tau^2 - \frac{(\tau^2 + 2\tau)^{3/2}}{3 \sqrt{x^2 + 2x}}.
		\end{split}
	\end{equation}
	The behavior of the right-hand side for $\tau\to0^+$ is particularly useful. Indeed,
	$$(\tau^2+2\tau)^{3/2} = 2\sqrt{2}\,\tau^{3/2} + O(\tau^{5/2}),$$
	and therefore
	\begin{equation}\label{eq:limit-of-F-by-y-when-y-goes-to-infinity-and-for-small-tau}
		\lim_{y\to\infty}\frac{F(x, y, \tau)}{y} = -\frac{2\sqrt{2}}{3\sqrt{x^2 + 2x}}\tau^{3/2} + O(\tau^2).
	\end{equation}

	\vspace*{1.5mm}
	\begin{proof}[\textbf{Proof of Proposition~\ref{prop:bddness-of-the-coupled-extremal-region}}]
		For a fixed $x > 0$, it follows from \eqref{eq:limit-of-F-by-y-when-y-goes-to-infinity-and-for-small-tau} that there exists a sufficiently small $\tau'_x \in (0, x)$ such that
		$$\lim_{y\to\infty}\frac{F(x, y, \tau'_x)}{y} < 0.$$
		Hence, there exists a constant $K(x) > 0$ such that $F(x, y, \tau'_x) < 0$ for all $y > K(x)$. Consequently, for every fixed $x > 0$, the set of values $y > 0$ for which $(x, y) \in \mathcal{S}_{\mathrm{ext}}$ is bounded above.
	\end{proof}

	\vspace*{1.5mm}
	We conclude with the following remarks. We expect the set $\mathcal{S}_{\mathrm{ext}}$ to be connected. As $(x, y)$ approaches the boundary $\partial \mathcal{S}_{\mathrm{ext}}$, the function $F(x, y, \tau)$ develops an interior local minimum that vanishes at some point $\tau_0 \in (0, x)$. Thus, for $(x, y) \in \partial \mathcal{S}_{\mathrm{ext}}$, the function $F(x, y, \cdot)$ satisfies \eqref{eq:bdd-cond-for-F-general-x-y},
	\begin{equation}\label{eq:boundary-tangency-cond}
		F(x, y, \tau_0) = 0 \quad \text{and} \quad  \frac{\partial F}{\partial\tau}(x, y, \tau_0) = 0,
	\end{equation}
	and $F(x, y, \tau) \geq 0$ for all $\tau\in[0, x]$. Setting $\phi_x (\tau) := \frac{F(x, y, \tau)}{1 + \tau}$ for $\tau\in[0, x]$, we see that $\phi_x$ satisfies
	\begin{align*}
		\phi_x(0) = \phi_x (\tau_0) = \phi_x(x) = 0, \quad \phi_{x}'(0) = - \phi_{x}'(x) = 1, \quad \text{and} \quad \phi_{x}'(\tau_0) = 0.
	\end{align*}
	By \cite[Definition 4]{Sze2009-Cal-func-ruled}, this implies that $\phi_x$ is a \emph{singular} momentum profile. Consequently, the metric $\omega_x$ associated with the momentum profile $\phi_x$ is \emph{degenerate}.

	\vspace*{1.5mm}
	Differentiating $\zeta(x, y, \tau)$ with respect to $\tau$, we obtain
	$$\frac{\partial\zeta}{\partial\tau}(x, y, \tau) = \frac{(1+\tau)\eta(x, y)}{\zeta(x, y, \tau)}.$$
	Consequently, from \eqref{eq:solution-F-shorter-formula}, it follows that
	\begin{align*}
		\frac{\partial F}{\partial\tau}(x, y, \tau) = \frac{\partial P_4}{\partial\tau}(x, y, \tau) - (1 + \tau) \zeta(x, y, \tau).
	\end{align*}
	Suppose there exists a $\tau_0 \in (0, x)$ satisfying \eqref{eq:boundary-tangency-cond}. Then, from \eqref{eq:solution-F-shorter-formula}, we have
	\begin{align*}
		\zeta(x, y, \tau_0)^3 = 1 + 3 \eta(x, y) P_4(x, y, \tau_0).
	\end{align*}
	Moreover, the above computation yields
	\begin{align*}
		\frac{\partial P_4}{\partial\tau}(x, y, \tau_0) = (1 + \tau_0) \zeta(x, y, \tau_0).
	\end{align*}
	It implies that
	\begin{align*}
		\left(\frac{\partial P_4}{\partial\tau}(x, y, \tau_0)\right)^3 = (1 + \tau_0)^3 \left(1 + 3 \eta(x, y) P_4(x, y, \tau_0)\right).
	\end{align*}
	We define
	\begin{align*}
		\Psi(x, y, \tau) := \left(\frac{\partial P_4}{\partial\tau}(x, y, \tau)\right)^3 - (1 + \tau)^3 \left(1 + 3 \eta(x, y) P_4(x, y, \tau)\right).
	\end{align*}
	Therefore, for every boundary point $(x, y) \in \partial\mathcal{S}_{\mathrm{ext}}$, there exists a $\tau_0 \in (0, x)$ such that
	\begin{equation}\label{eq:bdd-tangency-cond-in-terms-of-function-H}
		\Psi(x, y, \tau_0) = 0 \qquad \text{and} \qquad \frac{\partial P_4}{\partial\tau}(x, y, \tau_0) > 0.
	\end{equation}
	The second condition holds because $\zeta(x, y, \tau_0) > 0$ by definition.

	\vspace*{2mm}
	\section*{Acknowledgements}
	
	The author thanks Prof. Vamsi Pingali and Prof. Ved Datar for their insightful comments and valuable feedback on an earlier draft of this paper. This work was supported in part by an Institute Post Doctoral Fellowship from the Indian Institute of Technology Bombay. The author would also like to thank Prof. Saikat Mazumdar for hosting the postdoctoral stay.

	\vspace*{2mm}
	\section*{Declarations}
	
	\textbf{Data availability statement:} Data sharing not applicable to this article since no datasets were generated or analyzed during the current study.
	
	\vspace*{1.5mm}
	\textbf{Conflict of interest statement:} The author declare no conflict of interest.

	\vspace*{2.5mm}
	

\end{document}